\theoremstyle{plain}
\newtheorem{thm}{Theorem}[section]
\newtheorem{cor}[thm]{Corollary}
\newtheorem{lem}[thm]{Lemma}
\newtheorem{prop}[thm]{Proposition}
\newtheorem{defn}[thm]{Definition}
\newtheorem{exa}[thm]{Example}
\newtheorem{rem}[thm]{Remark}
\newtheorem{ques}[thm]{Question}
\begin{document}

\title [{{Graded Weakly Prime Ideals of Non-Commutative Rings}}]{Graded Weakly Prime Ideals of Non-Commutative Rings}

\author[{{A. S. Alshehry }}]{\textit{Azzh Saad Alshehry }}

\address
{\textit{Azzh Saad Alshehry, Department of Mathematical Sciences, Faculty of Sciences,
Princess Nourah Bint Abdulrahman University, P.O. Box 84428, Riyadh 11671, Saudi Arabia.}}
\bigskip
{\email{\textit{asalshihry@pnu.edu.sa}}}

 \author[{{R. Dawwas }}]{\textit{Rashid Abu-Dawwas }}

\address
{\textit{Rashid Abu-Dawwas, Department of Mathematics, Yarmouk
University, Irbid, Jordan.}}
\bigskip
{\email{\textit{rrashid@yu.edu.jo}}}

 \subjclass[2010]{16W50, 13A02}

\date{}

\begin{abstract} In this article, we consider the structure of graded rings, not necessarily commutative nor with unity, and study the graded weakly prime ideals. We investigate the graded rings in which all graded ideals are graded weakly prime. Several properties are given, and several examples to support given propositions are constructed. We initiate the study of graded weakly total prime ideals and investigate graded rings for which every proper graded ideal is graded weakly total prime.
\end{abstract}

\keywords{Graded weakly prime ideals, graded weakly total prime ideals, graded total prime ideals, graded prime ideals.
 }
 \maketitle

\section{Introduction}

Throughout this article, all rings are not necessarily commutative nor with unity. Let $G$ be a group with identity $e$. Then a ring $R$ is said to be $G$-graded if $R=\displaystyle\bigoplus_{g\in G} R_{g}$ with $R_{g}R_{h}\subseteq R_{gh}$ for all $g, h\in G$ where $R_{g}$ is an additive subgroup of $R$ for all $g\in G$; here $R_{g}R_{h}$ denotes the additive subgroup of $R$ consisting of all finite sums of elements $r_{g}s_{h}$ with $r_{g}\in R_{g}$ and $s_{h}\in R_{h}$. We denote this by $G(R)$. It is clear that $R_{g}$ is an $R_{e}$-module for all $g\in G$. The elements of $R_{g}$ are called homogeneous of degree $g$. If $x\in R$, then $x$ can be written as $\displaystyle\sum_{g\in G}x_{g}$, where $x_{g}$ is the component of $x$ in $R_{g}$. Also, we set $h(R)=\displaystyle\bigcup_{g\in G}R_{g}$. Moreover, it has been proved in \cite{Nastasescue} that $R_{e}$ is a subring of $R$ and if $R$ has a unity $1$, then $1\in R_{e}$. Let $I$ be an ideal of a graded ring $R$. Then $I$ is said to be a graded ideal if $I=\displaystyle\bigoplus_{g\in G}(I\cap R_{g})$, i.e., for $x\in I$, $x=\displaystyle\sum_{g\in G}x_{g}$, where $x_{g}\in I$ for all $g\in G$. The following example shows that an ideal of a graded ring need not be graded.

\begin{exa}\label{A}(\cite{Dawwas Bataineh Muanger}) Consider $R=M_{2}(K)$ (the ring of all $2\times2$ matrices with entries from a field $K$) and $G=\mathbb{Z}_{4}$ (the group of integers modulo $4$). Then $R$ is $G$-graded by
\begin{center}
$R_{0}=\left(
   \begin{array}{cc}
     K & 0 \\
     0 & K \\
   \end{array}
 \right)$, $R_{2}=\left(
                    \begin{array}{cc}
                      0 & K \\
                      K & 0 \\
                    \end{array}
                  \right)$ and $R_{1}=R_{3}=\{0\}$.
\end{center} Consider the ideal $I=\langle\left(
                                                    \begin{array}{cc}
                                                      1 & 1 \\
                                                      1 & 1 \\
                                                    \end{array}
                                                  \right)\rangle$ of $R$. Note that, $\left(
                                                                                 \begin{array}{cc}
                                                                                   1 & 1 \\
                                                                                   1 & 1 \\
                                                                                 \end{array}
                                                                               \right)\in I$
such that $\left(
             \begin{array}{cc}
               1 & 1 \\
               1 & 1 \\
             \end{array}
           \right)=\underbrace{\left(
                                 \begin{array}{cc}
                                   1 & 0 \\
                                   0 & 1 \\
                                 \end{array}
                               \right)}_{\in R_{0}}+\underbrace{\left(
                                                                   \begin{array}{cc}
                                                                     0 & 1 \\
                                                                     1 & 0 \\
                                                                   \end{array}
                                                                 \right)}_{\in R_{2}}$.
If $I$ is a graded ideal of $R$, then $\left(
                                         \begin{array}{cc}
                                           1 & 0 \\
                                           0 & 1 \\
                                         \end{array}
                                       \right)\in I$ which is a contradiction. So, $I$ is not graded ideal of $R$.
\end{exa}

\begin{lem} \label{B}(\cite{Dawwas Bataineh Muanger}) Let $R$ be a graded ring.

\begin{enumerate}

\item If $I$ and $J$ are graded ideals of $R$, then $I+J$ and $I\bigcap J$ are graded ideals of $R$,

\item If $x\in h(R)$, then $Rx$ is a graded ideal of $R$.
\end{enumerate}
\end{lem}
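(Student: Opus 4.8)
The plan is to work directly from the defining property of a graded ideal: an ideal $K$ of $R$ is graded precisely when, for every $x \in K$ with homogeneous decomposition $x = \sum_{g} x_g$, each component $x_g$ again lies in $K$. Since $I+J$, $I \cap J$, and $Rx$ are readily checked to be ideals by standard ring-theoretic arguments (sums and intersections of ideals are ideals, and $Rx$ is at least a left ideal, which is the relevant notion here), the entire content of the lemma lies in verifying this component-wise closure. I would therefore reduce each item to that single check.

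For part (1), let $x \in I + J$ and write $x = a + b$ with $a \in I$ and $b \in J$. Decomposing $a = \sum_g a_g$ and $b = \sum_g b_g$ into homogeneous parts, the gradedness of $I$ gives $a_g \in I$ and that of $J$ gives $b_g \in J$ for every $g$. Hence the degree-$g$ component of $x$ is $x_g = a_g + b_g \in I + J$, so $I+J$ is graded. For the intersection, take $x \in I \cap J$ with components $x_g$; applying gradedness of $I$ yields $x_g \in I$ and gradedness of $J$ yields $x_g \in J$, whence $x_g \in I \cap J$. This settles part (1); it is essentially routine once the correct characterization is invoked.

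For part (2), fix $x \in R_d$ for some $d \in G$ and take an arbitrary $y \in Rx$, say $y = \sum_i r_i x$ a finite sum with $r_i \in R$. Expanding each $r_i = \sum_h (r_i)_h$ into homogeneous parts and using $R_h R_d \subseteq R_{hd}$, each product $(r_i)_h x$ is homogeneous of degree $hd$. The crucial bookkeeping step is to regroup these summands by their actual degree in $G$: for $k \in G$ the degree-$k$ component of $y$ is $y_k = \big(\sum_{hd = k} \sum_i (r_i)_h\big)x$, where the element in parentheses is a finite sum of homogeneous elements of $R$ and hence lies in $R$. Thus $y_k \in Rx$, proving $Rx$ graded.

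The main obstacle is precisely this regrouping in part (2): because the grading is indexed by the group $G$ and multiplication on the right by the fixed homogeneous $x$ of degree $d$ shifts degrees by $d$, one must pass from the index $h$ of the multiplier to the index $k = hd$ of the resulting component, and then recognize that collecting all contributions with $hd = k$ still factors as (an element of $R$) times $x$. Care is also warranted in the non-unital, non-commutative setting: I would confirm at the outset that $Rx$ is an ideal in the sense used in the paper (it is a left ideal, with closure under left multiplication immediate from $RRx \subseteq Rx$), since without a unit $Rx$ need not contain $x$ itself and right-sided closure is not automatic.
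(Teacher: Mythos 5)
Your proof is correct. Note that the paper itself offers no argument for this lemma --- it is quoted verbatim from reference [1] (Abu-Dawwas, Bataineh and Al-Muanger) --- so there is nothing internal to compare against; what you give is the standard verification one would expect. Part (1) is exactly the routine component-wise check. In part (2) your regrouping step can be simplified: since $G$ is a group, the equation $hd=k$ has the unique solution $h=kd^{-1}$, so the degree-$k$ component of $y=\sum_i r_i x$ is just $\bigl(\sum_i (r_i)_{kd^{-1}}\bigr)x\in Rx$ with no genuine collision of degrees to worry about. Your caveats about the non-unital, non-commutative setting are well taken: $Rx$ is only a left ideal in general and need not contain $x$, and the cited source should indeed be read as asserting that $Rx$ is a graded \emph{left} ideal; this does not affect the validity of your argument, which only uses $R_hR_d\subseteq R_{hd}$ and additive closure.
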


Graded prime ideals of graded commutative rings with unity have been introduced and studied in \cite{Refai Hailat Obiedat}. A proper graded ideal $P$ of a graded commutative ring $R$ with unity is said to be graded prime if whenever $x, y\in h(R)$ such that $xy\in P$, then either $x\in P$ or $y\in P$. The concept of graded prime ideals and its generalizations have an outstanding location in graded commutative algebra. They are valuable tools to determine the properties of graded commutative rings. Various generalizations of graded prime ideals have been studied. Indeed, Atani introduced in \cite{Atani Prime} the concept of graded weakly prime ideals. A proper graded ideal $P$ of a graded commutative ring $R$ with unity is said to be a graded weakly prime ideal of $R$ if whenever $x, y\in h(R)$ such that $0\neq xy\in P$, then $x\in P$ or $y\in P$. In \cite{Dawwas Bataineh Muanger}, graded prime ideals of graded non-commutative rings have been studied. In this article, we consider the structure of graded rings, not necessarily commutative nor with unity, and study the graded weakly prime ideals.

By Theorem 2.12 of Atani \cite{Atani Prime}, the following statements are equivalent for a graded ideal $P$ of $G(R)$ with $P\neq R$ where $R$ is a graded commutative ring with unity:

\begin{enumerate}
\item $P$ is a graded weakly prime ideal of $G(R)$.

\item For each $g, h\in G$, the inclusion $0\neq IJ\subseteq P$ with $R_{e}$-submodules $I$ of $R_{g}$ and $J$ of $R_{h}$ implies that $I\subseteq P$ or $J\subseteq P$.
\end{enumerate}

For graded rings that are not necessarily commutative, it is clear that (2) does not imply (1). So, we define a graded ideal of $G(R)$ to be graded weakly prime as follows: a graded ideal $P$ of $G(R)$ with $P\neq R$ is said to be a graded weakly prime ideal of $G(R)$ if for each $g, h\in G$, the inclusion $0\neq IJ\subseteq P$ with $R_{e}$-submodules $I$ of $R_{g}$ and $J$ of $R_{h}$ implies that $I\subseteq P$ or $J\subseteq P$. In \cite{Dawwas Bataineh Muanger}, the standard definition of a graded prime ideal $P$ for a graded noncommutative ring $R$ is that $P\neq R$ and whenever $I$ and $J$ are graded ideals of $R$ such that $IJ\subseteq P$, then either $I\subseteq P$ or $J\subseteq P$. Accordingly, we define a graded ideal of a graded ring $R$ to be graded weakly prime as follows: a proper graded ideal $P$ of $R$ is said to be a graded weakly prime ideal of $R$ if whenever $I$ and $J$ are graded ideals of $R$ such that $0\neq IJ\subseteq P$, then either $I\subseteq P$ or $J\subseteq P$.

Note that by definition, a graded weakly prime ideal is a proper graded ideal of a graded ring. It is therefore not possible that every graded ideal of a graded ring is a graded weakly prime ideal. However, a graded ring whose zero ideal is graded prime is called a graded prime ring. In this sense, every graded ring is a graded weakly prime ring since the zero ideal is always graded weakly prime. We may therefore say that every graded ideal of a graded ring is graded weakly prime when every proper graded ideal of the graded ring is a graded weakly prime ideal. In this article, we investigate the graded rings in which all graded ideals are graded weakly prime.

In \cite{Dawwas Bataineh Muanger}, a proper graded ideal $P$ of a graded ring $R$ is a graded total prime ideal if $xy\in P$ implies $x\in P$ or $y\in P$ for $x, y\in h(R)$. In this article, we introduce the concept of graded weakly total prime ideals. A proper graded ideal $P$ of a graded ring $R$ is a graded weakly total prime ideal if $0\neq xy\in P$ implies $x\in P$ or $y\in P$ for $x, y\in h(R)$. We investigate graded rings for which every proper graded ideal is graded weakly total prime.

\section{Graded Weakly Prime Ideals}

In this section, we study the structure of graded rings, not necessarily commutative nor with unity, in which every graded ideal is graded weakly prime.

\begin{defn}
\begin{enumerate}
\item A proper graded ideal $P$ of $R$ is said to be a graded weakly prime ideal of $R$ if whenever $I$ and $J$ are graded ideals of $R$ such that $0\neq IJ\subseteq P$, then either $I\subseteq P$ or $J\subseteq P$.

\item A graded ideal of $G(R)$ with $P\neq R$ is said to be a graded weakly prime ideal of $G(R)$ if for each $g, h\in G$, the inclusion $0\neq IJ\subseteq P$ with $R_{e}$-submodules $I$ of $R_{g}$ and $J$ of $R_{h}$ implies that $I\subseteq P$ or $J\subseteq P$.
\end{enumerate}
\end{defn}

Our early proposition is Proposition 2.2 of Atani \cite{Atani Prime} in a further general scene.

\begin{prop}\label{Proposition 1(1)}Let $P$ be a graded weakly prime ideal of $R$. If $P$ is not graded prime ideal of $R$, then $P^{2} = 0$.
\end{prop}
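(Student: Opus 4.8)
The plan is to prove the contrapositive-flavoured dichotomy directly: assuming $P$ is graded weakly prime but fails to be graded prime, I want to force $P^2 = 0$. The starting point is to unpack what it means for $P$ to fail to be graded prime. Since $P$ is a proper graded ideal, there must exist graded ideals $I$ and $J$ of $R$ with $IJ \subseteq P$ but neither $I \subseteq P$ nor $J \subseteq P$. Comparing this with the graded weakly prime hypothesis, which concludes $I \subseteq P$ or $J \subseteq P$ precisely when $0 \neq IJ \subseteq P$, the only way to have $IJ \subseteq P$ escape the weak-prime conclusion is if $IJ = 0$. So from the failure of primeness I extract two graded ideals $I, J$ with $IJ = 0$, $I \not\subseteq P$, and $J \not\subseteq P$.

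**Next I would** leverage these ideals to show $P^2 = 0$. The idea (mirroring Atani's commutative argument) is to assume for contradiction that $P^2 \neq 0$ and derive that $P$ would then actually be graded prime, contradicting our standing assumption. Concretely, I would take any graded ideals $A, B$ with $AB \subseteq P$ and aim to show $A \subseteq P$ or $B \subseteq P$ outright. If $AB \neq 0$, the graded weakly prime property gives the conclusion immediately. The delicate case is $AB = 0 \subseteq P$; here I would play $A, B$ off against the witnesses $I, J$ by considering the combined products $(A+I)(B+J)$ or related sums like $A(B+J)$ and $(A+I)B$. The goal is to arrange a product that is genuinely nonzero and sits inside $P$, so that the weak-prime property applies and pins one of the original factors inside $P$; the nonvanishing is what $P^2 \neq 0$ is supposed to supply.

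**The hard part will be** the bookkeeping in that last case: one must verify that the auxiliary products (built from $A, B, I, J$) are indeed graded ideals contained in $P$ (using Lemma \ref{B}(1) for sums and intersections of graded ideals, and the fact that $IJ = 0$), and then show at least one of them is nonzero using $P^2 \neq 0$. Because the ring is noncommutative and possibly without unity, I must be careful that products like $IJ$ and $JI$ are not conflated, and that $R_e$-module structure and the degree-wise decomposition of graded ideals are respected throughout. Once a nonzero product inside $P$ is secured and the weak-prime property is applied, tracing which factor lands in $P$ should contradict $I \not\subseteq P$ or $J \not\subseteq P$, forcing $P^2 = 0$ and completing the argument.
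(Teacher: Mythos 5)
Your first step is exactly the paper's: since $P$ is weakly prime but not prime, the only way a pair of graded ideals $I,J$ can witness the failure of primeness is with $IJ=0$, $I\nsubseteq P$, $J\nsubseteq P$. That extraction is correct and is where the paper also starts.

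The gap is in the second half. You correctly identify that the whole difficulty is to manufacture a \emph{nonzero} product of graded ideals that lands inside $P$, but the specific candidates you name --- $(A+I)(B+J)$, $A(B+J)$, $(A+I)B$ --- do not work: since $AB=IJ=0$, the first of these equals $AJ+IB$, and there is no reason whatsoever for $AJ$ or $IB$ to be contained in $P$, nor for the product to be nonzero. The missing idea is to augment each witness by $P$ \emph{itself}, not to mix the two pairs of ideals. Assuming $P^{2}\neq 0$, one forms $(I+P)(J+P)$, which is a product of graded ideals by Lemma \ref{B}(1); it satisfies $(I+P)(J+P)=IJ+IP+PJ+P^{2}=IP+PJ+P^{2}\subseteq P$ because $IJ=0$ and $P$ is a two-sided ideal, and it contains $P^{2}\neq 0$ because $P\subseteq I+P$ and $P\subseteq J+P$. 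The weakly prime hypothesis then forces $I+P\subseteq P$ or $J+P\subseteq P$, i.e.\ $I\subseteq P$ or $J\subseteq P$, contradicting the choice of witnesses. Without this trick your ``hard part'' remains genuinely unresolved, and the hints you give point in a direction that cannot be repaired, so as written the proposal does not constitute a proof. (Also note that your detour through arbitrary ideals $A,B$ with $AB\subseteq P$ is unnecessary: the contradiction is obtained directly from the fixed witnesses $I,J$.)
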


\begin{proof}Since $P$ is graded weakly prime ideal of $R$ which is not prime, there exist graded ideals $I\nsubseteq P$ and $J\nsubseteq P$ with $0 = IJ\subseteq P$. If $P^{2}\neq 0$, then $0\neq P^{2}\subseteq (I + P)(J + P)\subseteq P$, which implies that either $I\subseteq P$ or $J\subseteq P$, which is a contradiction. Hence, $P^{2} = 0$.
\end{proof}

\begin{prop}\label{Proposition 2(1)}Let $R$ be a graded ring with unity and $P$ be a graded ideal of $G(R)$ with $P\neq R$. Then $P$ is a graded weakly prime ideal of $G(R)$ if and only if whenever $x, y\in h(R)$ such that $0\neq xRy\subseteq P$, then either $x\in P$ or $y\in P$.
\end{prop}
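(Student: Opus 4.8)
The plan is to read this as the graded, weakly prime analogue of the classical fact that a (two-sided) ideal $P$ is prime exactly when $aRb\subseteq P$ forces $a\in P$ or $b\in P$. The device that drives both implications is Lemma \ref{B}(2): a homogeneous element $x\in h(R)$ may be traded for the graded ideal $Rx$ it generates, and since $R$ has a unity with $1\in R_{e}$, one always has $x=1\cdot x\in Rx$. Thus membership questions about homogeneous elements translate into containment questions about principal graded ideals, to which graded weak primeness applies directly.

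For the forward implication, I assume $P$ is graded weakly prime and take $x,y\in h(R)$ with $0\neq xRy\subseteq P$. By Lemma \ref{B}(2), $Rx$ and $Ry$ are graded ideals, and the essential computation is $(Rx)(Ry)\subseteq R(xRy)\subseteq RP\subseteq P$, using that $P$ absorbs multiplication. To see that this product is nonzero, I pick $r\in h(R)$ with $xry\neq 0$ --- such $r$ exists because $xRy\neq 0$ --- and note that $xry=(1\cdot x)(r\cdot y)\in (Rx)(Ry)$ since $1\in R_{e}$. Hence $0\neq (Rx)(Ry)\subseteq P$, so graded weak primeness yields $Rx\subseteq P$ or $Ry\subseteq P$; as $x\in Rx$ and $y\in Ry$, this is precisely $x\in P$ or $y\in P$.

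For the converse I assume the elementwise condition and let $I,J$ be graded ideals with $0\neq IJ\subseteq P$ and, aiming at a contradiction, $I\nsubseteq P$ and $J\nsubseteq P$. Since $I$, $J$ and $P$ are graded, I may choose homogeneous $x\in I\setminus P$ and $y\in J\setminus P$; because $I$ is an ideal it absorbs multiplication on the right, so $xR\subseteq I$ and therefore $xRy\subseteq Iy\subseteq IJ\subseteq P$. If $xRy\neq 0$, the hypothesis gives $x\in P$ or $y\in P$, contradicting the choice of $x,y$. The genuine obstacle is thus the degenerate case forced upon us here, in which $xRy=0$ for every homogeneous $x\in I\setminus P$ and $y\in J\setminus P$; note that then $xy=x\cdot 1\cdot y=0$ as well, so no nonzero product of such elements can witness $IJ\neq 0$. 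I plan to convert $IJ\neq 0$ into a contradiction by a homogeneous-degree bookkeeping argument: take homogeneous $a\in I$, $b\in J$ with $ab\neq 0$, observe that $0\neq ab\in aRb\subseteq IJ\subseteq P$ forces $a\in P$ or $b\in P$, and then correct the offending factor by adding to it a same-degree homogeneous element of $I\setminus P$ (respectively $J\setminus P$) so as to manufacture homogeneous representatives lying outside $P$ whose product is still $ab\neq 0$, contradicting the vanishing just established. Securing such a same-degree correction in the relevant graded component is the delicate point, and the part of the argument I expect to require the most care.
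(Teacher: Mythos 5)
There is a genuine gap, and it begins with which definition of ``graded weakly prime'' is in play. The proposition concerns a graded weakly prime ideal of $G(R)$, which the paper defines via $R_{e}$-submodules $I$ of $R_{g}$ and $J$ of $R_{h}$ (for each pair $g,h\in G$), not via graded ideals of $R$. Your argument is pitched entirely at the other notion: in the forward direction you feed the graded ideals $Rx$ and $Ry$ into the weak primeness hypothesis, but the hypothesis you are actually given only applies to $R_{e}$-submodules of single homogeneous components, and $Rx$, $Ry$ are spread over many components. The paper instead uses $R_{e}x\subseteq R_{g}$ and $R_{e}y\subseteq R_{h}$, which are legitimate inputs to the definition and still contain $x$ and $y$ because $1\in R_{e}$.

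The converse is where the mismatch really bites, and you have correctly located the sore spot yourself: after reducing to the case $xRy=0$ for all homogeneous $x\in I\setminus P$ and $y\in J\setminus P$, you need to correct a homogeneous $a\in I\cap P$ with $ab\neq 0$ by adding a same-degree element of $I\setminus P$, and no such element need exist --- $I\cap R_{g}$ may lie entirely inside $P$ for the degree $g$ of $a$, with the witnesses to $I\nsubseteq P$ living in other components. So the proof is not complete, and I do not see how to close it along these lines for the graded-ideal notion. For the notion actually asserted the difficulty evaporates: there $I\subseteq R_{g}$ and $J\subseteq R_{h}$, so every element of $I$ and of $J$ is already homogeneous of the right degree. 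The paper's proof exploits exactly this: taking $x\in I-P$, $z\in I\cap P$, $y\in J-P$, $w\in J\cap P$, the sums $x+z$ and $y+w$ are homogeneous and lie outside $P$, so $(x+z)R(y+w)=0$; letting $z$ and $w$ vary (including taking them to be $0$) forces $xy=xw=zy=zw=0$ and hence $IJ=0$. Rewriting your argument with $I,J$ taken to be $R_{e}$-submodules of $R_{g}$ and $R_{h}$, and with $R_{e}x$, $R_{e}y$ in place of $Rx$, $Ry$, essentially recovers the paper's proof.
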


\begin{proof}Suppose that $P$ is a graded weakly prime ideal of $G(R)$. Let $x, y\in h(R)$ such that $0\neq xRy\subseteq P$. Then $x\in R_{g}$ and $y\in R_{h}$ for some $g, h\in G$, and then $R_{e}x$ is an $R_{e}$-submodule of $R_{g}$ and $R_{e}y$ is an $R_{e}$-submodule of $R_{h}$. Since $R$ has unity, $0\neq(R_{e}x)(R_{e}y)\subseteq P$, whence $x\in R_{e}x\subseteq P$ or $y\in R_{e}y\subseteq P$. Conversely, assume that $g, h\in G$. Suppose that $IJ\subseteq P$ for $R_{e}$-submodule $I$ of $R_{g}$ and $R_{e}$-submodule $J$ of $R_{h}$, where $I\nsubseteq P$ and $J\nsubseteq P$. Let $x\in I-P$, $y\in J-P$, $z\in I\bigcap P$, and $w\in J\bigcap P$. Since $x + z, y + w\notin P$, we should have $0 = (x + z)R(y + w)$. Considering all combinations where $z$ and/or $w$ equal zero shows that $0 = xy = xw = zy = zw$, and hence $IJ = 0$.
\end{proof}

We are interested in the structure of graded rings in which every graded ideal is graded weakly prime. Note that by definition, a graded weakly prime ideal is a proper graded ideal of a graded ring. It is therefore not possible that every graded ideal of a graded ring is a graded weakly prime ideal. However, a graded ring whose zero ideal is graded prime is called a graded prime ring. In this sense, every graded ring is a graded weakly prime ring since the zero ideal is always graded weakly prime. We may therefore say that every graded ideal of a graded ring is graded weakly prime when every proper graded ideal of the graded ring is a graded weakly prime ideal. If $R^{2} = 0$, then it is clear that every graded ideal of $R$ is graded weakly prime. In particular, if a graded ideal $P$ of a graded ring $R$ is graded weakly prime but not a graded prime ideal, then every graded ideal of $P$ as a graded ring is graded weakly prime by Proposition \ref{Proposition 1(1)}.

\begin{prop}\label{Proposition 3(1)}Every graded ideal of a graded ring $R$ is graded weakly prime if and only if for any graded ideals $P$ and $Q$ of $R$, $PQ = P$, $PQ = Q$, or $PQ = 0$.
\end{prop}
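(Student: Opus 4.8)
The plan is to prove the two implications separately, with the product ideal $L=PQ$ as the central object in both. As a preliminary, I would record two elementary facts about graded ideals $P$ and $Q$ of $R$: first, that $PQ\subseteq P\cap Q$, which holds because $P$ is a right ideal and $Q$ is a left ideal; and second, that $PQ$ is itself a \emph{graded} ideal, since it is the additive subgroup generated by the homogeneous products $r_g s_h$ with $r_g\in P\cap R_g$ and $s_h\in Q\cap R_h$, so every element of $PQ$ has all its homogeneous components again in $PQ$. These are routine but I would state them explicitly because the whole argument leans on them.

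For the backward direction ($\Leftarrow$), I would argue directly and expect no difficulty. Suppose every pair of graded ideals $P,Q$ satisfies $PQ=P$, $PQ=Q$, or $PQ=0$, and let $K$ be a proper graded ideal. Given graded ideals $I,J$ with $0\neq IJ\subseteq K$, apply the hypothesis to the pair $I,J$: the alternative $IJ=0$ is ruled out since $IJ\neq 0$, so $IJ=I$ or $IJ=J$. In the first case $I=IJ\subseteq K$ and in the second $J=IJ\subseteq K$, which is exactly what graded weak primeness of $K$ requires.

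For the forward direction ($\Rightarrow$), let $P,Q$ be graded ideals and set $L=PQ$. If $L=0$ we are in the third alternative, so assume $L\neq 0$ and split on whether $L$ is proper. When $L\neq R$, the ideal $L$ is graded weakly prime by hypothesis; since $0\neq PQ\subseteq L$ with $P,Q$ graded ideals, weak primeness gives $P\subseteq L$ or $Q\subseteq L$, and combining with the preliminary containment $L=PQ\subseteq P\cap Q$ forces $PQ=P$ or $PQ=Q$. The case $L=R$ must be treated on its own: then $R=PQ\subseteq P$ and $R=PQ\subseteq Q$ give $P=Q=R$, so $PQ=R=P$, again landing in the desired trichotomy.

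The only place needing genuine care is this last point: the definition of graded weakly prime applies only to \emph{proper} graded ideals, so one cannot invoke the hypothesis on $L$ when $L=R$, and that degenerate case has to be disposed of by hand. The other subtle step is verifying that $L=PQ$ really is a graded ideal, so that ``every graded ideal is graded weakly prime'' is applicable to it; once these two observations are in place, the rest is bookkeeping with the inclusions $PQ\subseteq P$ and $PQ\subseteq Q$.
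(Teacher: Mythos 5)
Your proposal is correct and follows essentially the same route as the paper's proof: apply the hypothesis to the graded ideal $PQ$ itself (using $0\neq PQ\subseteq PQ$ together with $PQ\subseteq P\cap Q$) for the forward direction, handle $PQ=R$ separately, and run the trichotomy directly for the converse. Your explicit remarks that $PQ$ is a graded ideal and that the case $PQ=R$ must be excluded from the weak-primeness hypothesis are points the paper glosses over, but the argument is the same.
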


\begin{proof}Suppose that every graded ideal of $R$ is graded weakly prime. Let $P, Q$ be graded ideals of $R$. If $PQ\neq R$, then $PQ$ is graded weakly prime. If $0\neq PQ\subseteq PQ$, then we have $P\subseteq PQ$ or
$Q\subseteq PQ$, that is, $P = PQ$ or $Q = PQ$. If $PQ = R$, then we have $P = Q = R$ whence $R^{2} = R$. Conversely, let $K$ be a proper graded ideal of $R$ and suppose that $0\neq PQ\subseteq K$ for graded ideals $P$ and $Q$ of $R$. Then we have either $P = PQ\subseteq K$ or $Q = PQ\subseteq K$.
\end{proof}

\begin{cor}\label{Corollary 1(1)}Let $R$ be a graded ring in which every graded ideal of $R$ is graded weakly prime. Then for any graded ideal $P$ of $R$, either $P^{2} = P$ or $P^{2} = 0$.
\end{cor}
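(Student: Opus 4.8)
The plan is to derive this immediately from Proposition \ref{Proposition 3(1)}, which is the workhorse here. Since the hypothesis states that every graded ideal of $R$ is graded weakly prime, Proposition \ref{Proposition 3(1)} applies and guarantees the trichotomy $PQ = P$, $PQ = Q$, or $PQ = 0$ for \emph{any} pair of graded ideals $P$ and $Q$ of $R$. The whole idea is to specialize this to the diagonal case $Q = P$.

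Concretely, I would fix an arbitrary graded ideal $P$ of $R$ and invoke Proposition \ref{Proposition 3(1)} with the second ideal taken to be $P$ itself. This is legitimate because the proposition quantifies over all pairs of graded ideals, so the pair $(P, P)$ is admissible. The conclusion then reads $P^{2} = P$, $P^{2} = P$, or $P^{2} = 0$. The first two alternatives coincide, and collapsing them leaves precisely $P^{2} = P$ or $P^{2} = 0$, which is exactly the assertion of the corollary.

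I do not expect any genuine obstacle, since the statement is a direct specialization of the preceding proposition; the only point worth recording explicitly is that choosing $Q = P$ merges the two distinct nonzero outcomes $PQ = P$ and $PQ = Q$ of the trichotomy into the single outcome $P^{2} = P$.
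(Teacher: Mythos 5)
Your proposal is correct and is precisely the intended argument: the paper states this as an immediate corollary of Proposition \ref{Proposition 3(1)} (with no written proof), obtained by specializing the trichotomy to the pair $(P,P)$ so that the two alternatives $PQ=P$ and $PQ=Q$ collapse to $P^{2}=P$. Nothing further is needed.
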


The next example shows that the converse of Corollary \ref{Corollary 1(1)} is not true in general.

\begin{exa}\label{Example 1(1)}Let $K$ be a field and $R=K\bigoplus K\bigoplus K$. Then every ideal of $R$ is idempotent. So, if $R$ is $G$-graded by any group $G$, then every graded ideal of $R$ is idempotent. On the other hand, if we consider the trivial graduation for $R$ by a group $G$, then $P=K\bigoplus0\bigoplus0$ will be a graded ideal of $R$ which is not graded weakly prime.
\end{exa}

Suppose that a graded ring $R$ with unity has a graded maximal ideal $X$ and $X^{2} = 0$. Thus, the product of any two graded ideals contained in $X$ is zero. It is obvious that every proper graded ideal of $R$ is contained in $X$, and for any graded ideal $P$, $PR = RP = P$. Hence, every graded ideal of $R$ is graded weakly prime. In this case, note that, $X$ is the only graded prime ideal of $R$. In particular, Corollary \ref{Corollary 1(1)} yields that if a graded ring $R$ has the property that every graded ideal is graded weakly prime, then either $R^{2} = R$, or $R^{2} = 0$. Note that, $R^{2}$ is neither $0$ nor $R$ in the next example.

\begin{exa}\label{Example 2(1)}Let $S$ be a ring such that $S^{2}=0$, and let $K$ be a field. Suppose that $R=K\bigoplus S\bigoplus S$ and $G=\mathbb{Z}_{2}$. Then $R$ is $G$-graded by $R_{0}=K\bigoplus 0\bigoplus 0$ and $R_{1}=0\bigoplus S\bigoplus S$. Now, $X=R_{1}$ is a graded maximal ideal of $R$ and $X^{2}=0$. Consider the ideal $P=K\bigoplus0\bigoplus S$ of $R$. To prove that $P$ is graded ideal, let $x\in P$. Then $x=a+0+s$ for some $a\in K$ and $s\in S$, and then $x=(a+0+0)+(0+0+s)=x_{0}+x_{1}$ where $x_{0}=a+0+0\in R_{0}\bigcap P$ and $x_{1}=0+0+s\in R_{1}\bigcap P$. Hence, $P$ is a graded ideal of $R$. Similarly, $I=K\bigoplus S\bigoplus0$ is a graded ideal of $R$, and $0\neq I^{2}\subseteq P$ but $I\nsubseteq P$, which means that $P$ is not a graded weakly prime ideal of $R$.
\end{exa}

If a graded ring $R$ satisfying $R^{2} = R$ has a graded maximal ideal $X$ and $X^{2} = 0$, then every proper graded ideal of $R$ is contained in $X$. However, it is possible that $XR\neq X$. Thus, such a graded ring does not necessarily have the property that every graded ideal is graded weakly prime, see the following example.

\begin{exa}\label{Example 3(1)}Let $K$ be a field, $S=\left(\begin{array}{ccc}
                                                                     0 & K & K \\
                                                                     0 & K & K \\
                                                                     0 & 0 & 0
                                                                   \end{array}
\right)$ and $G=\mathbb{Z}_{2}$. Then $S$ is $G$-graded ring by $S_{0}=\left(\begin{array}{ccc}
                                                                                       0 & 0 & K \\
                                                                                       0 & K & 0 \\
                                                                                       0 & 0 & 0
                                                                                     \end{array}
\right)$ and $S_{1}=\left(\begin{array}{ccc}
                            0 & K & 0 \\
                            0 & 0 & K \\
                            0 & 0 & 0
                          \end{array}
\right)$. Now, $S$ has a unique graded maximal ideal $L=\left\{\left(\begin{array}{ccc}
                                                                       0 & x & y \\
                                                                       0 & 0 & z \\
                                                                       0 & 0 & 0
                                                                     \end{array}
\right):x, y, z\in K\right\}$. Consider the graded ideal $P=\left\{\left(\begin{array}{ccc}
                                                                       0 & 0 & y \\
                                                                       0 & 0 & 0 \\
                                                                       0 & 0 & 0
                                                                     \end{array}
\right):y\in K\right\}$ of $S$. Then $R=S/P$ is a graded ring by $R_{j}=(S_{j}+P)/P$ for $j=0, 1$. Note that, $R^{2}=R$ and $X=L/P$ is a graded maximal ideal of $R$ whose square is zero, the proper graded ideals $RX$ and $XR$ are not graded weakly prime.
\end{exa}

\begin{prop}\label{Proposition 4(1)}Let $R$ be a graded ring such that every graded ideal of $R$ is graded weakly prime and $R^{2} = R$. Then $R$ has at most two graded maximal ideals.
\end{prop}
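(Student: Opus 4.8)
The plan is to argue by contradiction. Assume $R$ has three distinct graded maximal ideals $M_1, M_2, M_3$; I will derive a contradiction with the hypothesis $R^2 = R$. The whole argument rests on two elementary facts about pairs of distinct graded maximal ideals, after which a single expansion of $R = R^2$ closes the proof.

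First I would record that for $i \neq j$ one has $M_i + M_j = R$: by Lemma \ref{B} the sum $M_i + M_j$ is again a graded ideal, and it properly contains $M_i$ (else $M_j \subseteq M_i$, forcing $M_i = M_j$ by maximality), so maximality of $M_i$ gives $M_i + M_j = R$. The key step is then to show $M_iM_j = 0$ whenever $i \neq j$. Since $M_i$ and $M_j$ are ideals, $M_iM_j \subseteq M_i \cap M_j$, and by Proposition \ref{Proposition 3(1)} the product $M_iM_j$ equals $M_i$, $M_j$, or $0$. If $M_iM_j = M_i$ then $M_i \subseteq M_j$, whence $M_i = M_j$, a contradiction; the case $M_iM_j = M_j$ is symmetric. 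Hence $M_iM_j = 0$ for all $i \neq j$.

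To finish, I would combine $M_1 + M_2 = R = M_1 + M_3$ with $R^2 = R$ and the distributivity of ideal multiplication over sums to write
\[
R = R^2 = (M_1 + M_2)(M_1 + M_3) = M_1^2 + M_1M_3 + M_2M_1 + M_2M_3 .
\]
By the previous step the last three summands vanish, so $R = M_1^2 \subseteq M_1$, contradicting $M_1 \neq R$. Thus $R$ cannot possess three distinct graded maximal ideals, which is the claim. I expect the only delicate point to be ruling out the possibilities $M_iM_j = M_i$ and $M_iM_j = M_j$ offered by Proposition \ref{Proposition 3(1)}; everything else is a short formal manipulation. Since no unity is assumed here, I would be careful to make every comparison directly through the maximality of the $M_i$ rather than appealing to any quotient-ring or comaximality machinery.
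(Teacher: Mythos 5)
Your proof is correct, and it is worth comparing with the paper's, which runs in the opposite direction. Both arguments rest on the same two facts: that weak primeness of every graded ideal forces $M_iM_j\in\{M_i,M_j,0\}$ (hence, since $M_iM_j\subseteq M_i\cap M_j$ and the $M_i$ are distinct maximal ideals, $M_iM_j=0$), and that $R^2=R$ can be expanded over the comaximal decompositions $R=M_i+M_j$. The paper asserts, without justification, that $X_1X_2\neq 0$ and then contradicts weak primeness of $X_1\cap X_2$ via $0\neq X_1X_2\subseteq X_1\cap X_2$; you instead prove $M_iM_j=0$ for every distinct pair and then contradict $R^2=R$ via $R=(M_1+M_2)(M_1+M_3)=M_1^2\subseteq M_1$. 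These are essentially contrapositive arrangements of one another: the paper's unproved claim $X_1X_2\neq 0$ is exactly what your final computation supplies (if $X_1X_2=0$ then $R=(X_1+X_3)(X_2+X_3)\subseteq X_1X_2+X_3=X_3$, which is absurd). Your version is therefore more self-contained than the paper's rather elliptical proof, and it isolates as a by-product the fact that the product of any two distinct graded maximal ideals vanishes, which is precisely the first assertion of Proposition \ref{Proposition 5(1)}. All of your individual steps --- $M_i+M_j=R$ by maximality and Lemma \ref{B}, the distributive expansion of the ideal product, and the elimination of the cases $M_iM_j=M_i$ and $M_iM_j=M_j$ via Proposition \ref{Proposition 3(1)} --- are valid in the non-unital, non-commutative graded setting.
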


\begin{proof}Suppose that $R$ has three distinct graded maximal ideals, say $X_{1}$, $X_{2}$, and $X_{3}$. Then $X_{1}X_{2}\neq 0$. Thus we have $0\neq X_{1}X_{2}\subseteq X_{1}\bigcap X_{2}$. But this implies either $X_{1}\subseteq X_{2}$ or $X_{2}\subseteq X_{1}$, a contradiction.
\end{proof}

The next example shows that the condition $R^{2} = R$ in Proposition \ref{Proposition 4(1)} is necessary.

\begin{exa}\label{Example 4(1)}Let $X$ be the unique maximal ideal of $\mathbb{Z}_{4}$. Then $R=X\bigoplus X\bigoplus X$ is a ring trivially graded by a group $G$ whose all graded ideals are graded weakly prime and having more than two graded maximal ideals.
\end{exa}

\begin{prop}\label{Proposition 5(1)}Let $R$ be a graded ring such that every graded ideal of $R$ is graded weakly prime. If $R$ has two graded maximal ideals, then their product is zero. Moreover, if $R$ has a unity, then $R$ is a direct sum of two graded simple rings.
\end{prop}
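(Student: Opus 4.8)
The plan is to split the statement into its two assertions and treat the product first, since it feeds into the decomposition. Write $X_{1}$ and $X_{2}$ for the two distinct graded maximal ideals. Because distinct graded maximal ideals are incomparable (a proper containment would violate maximality of the smaller one), I have $X_{1}\nsubseteq X_{2}$ and $X_{2}\nsubseteq X_{1}$. Now $X_{1}X_{2}$ is a graded ideal contained in $X_{1}\bigcap X_{2}$, and Proposition \ref{Proposition 3(1)} forces $X_{1}X_{2}\in\{X_{1},X_{2},0\}$. The first option gives $X_{1}=X_{1}X_{2}\subseteq X_{2}$ and the second gives $X_{2}=X_{1}X_{2}\subseteq X_{1}$, both contradicting incomparability; hence $X_{1}X_{2}=0$, and the identical argument yields $X_{2}X_{1}=0$ as well.

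For the second assertion I would now assume $R$ has a unity and show $R=X_{1}\oplus X_{2}$ as graded rings. Since $X_{2}\nsubseteq X_{1}$, the graded ideal $X_{1}+X_{2}$ (graded by Lemma \ref{B}) strictly contains $X_{1}$, so maximality of $X_{1}$ gives $X_{1}+X_{2}=R$. To see that the sum is direct, set $N=X_{1}\bigcap X_{2}$, a graded ideal again by Lemma \ref{B}. Using $N\subseteq X_{1}$ and $N\subseteq X_{2}$ together with the vanishing of the two cross products, I get $NR=N(X_{1}+X_{2})=NX_{1}+NX_{2}\subseteq X_{2}X_{1}+X_{1}X_{2}=0$; here the hypothesis that $R$ has a unity is exactly what I need, since then $N=N\cdot 1\subseteq NR=0$. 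Thus $X_{1}\bigcap X_{2}=0$ and $R=X_{1}\oplus X_{2}$; the relations $X_{1}X_{2}=X_{2}X_{1}=0$ make this an internal direct sum of rings, and since $X_{1},X_{2}$ are graded ideals one checks $R_{g}=(X_{1}\bigcap R_{g})\oplus(X_{2}\bigcap R_{g})$, so the decomposition is a graded ring direct sum. Writing $1=e_{1}+e_{2}$ with $e_{i}\in X_{i}$ and comparing homogeneous components (using $X_{1}\bigcap X_{2}=0$) shows $e_{1},e_{2}\in R_{e}$, each nonzero, so each $X_{i}$ is itself a graded ring with unity $e_{i}$.

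It then remains to prove that each summand is graded simple, i.e. has no nonzero proper graded ideal. The key observation is that, because $X_{1}X_{2}=X_{2}X_{1}=0$, a graded ideal $J$ of the ring $X_{1}$ is automatically a graded ideal of $R$ contained in $X_{1}$, and conversely; so it suffices to show the only graded ideals of $R$ lying in $X_{1}$ are $0$ and $X_{1}$. Suppose $J$ is such an ideal with $J\neq X_{1}$. By the modular law (valid since $J\subseteq X_{1}$), $X_{1}\bigcap(J+X_{2})=J+(X_{1}\bigcap X_{2})=J$; were $J+X_{2}=R$, this equality would force $J=X_{1}$, so $J+X_{2}$ is a proper graded ideal containing the maximal ideal $X_{2}$, whence $J+X_{2}=X_{2}$ and $J\subseteq X_{1}\bigcap X_{2}=0$. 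Therefore $X_{1}$ is graded simple, and by symmetry so is $X_{2}$.

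The step I expect to be the main obstacle is the directness $X_{1}\bigcap X_{2}=0$: this is precisely where the unity hypothesis enters, and without it one can only extract $N^{2}=0$ via Corollary \ref{Corollary 1(1)}, which does not force $N=0$. The graded-simplicity step is then a clean modular-law argument once the correspondence between graded ideals of $X_{i}$ and graded ideals of $R$ lying inside $X_{i}$ has been recorded.
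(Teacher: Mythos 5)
Your proof is correct and follows essentially the same route as the paper: the product $X_{1}X_{2}$ is forced to be zero by weak primeness (you route this through Proposition \ref{Proposition 3(1)}, the paper applies weak primeness of $X_{1}\bigcap X_{2}$ directly, which is the same computation), and the unity hypothesis kills $X_{1}\bigcap X_{2}$ via $(X_{1}\bigcap X_{2})(X_{1}+X_{2})\subseteq X_{2}X_{1}+X_{1}X_{2}=0$ exactly as in the paper. The only cosmetic difference is at the end: the paper states the decomposition as $R\cong R/X_{1}\bigoplus R/X_{2}$ (graded simplicity of each summand being immediate from maximality), whereas you realize it internally as $R=X_{1}\oplus X_{2}$ and verify simplicity of each $X_{i}$ by a modular-law argument, which amounts to the same thing since $X_{i}\cong R/X_{j}$.
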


\begin{proof}Suppose that $R$ has two distinct graded maximal ideals $X_{1}$ and $X_{2}$. Then since $X_{1}\bigcap X_{2}$ is graded weakly prime and $X_{1}X_{2}\subseteq X_{1}\bigcap X_{2}$, we have $X_{1}X_{2} = 0$, and similarly $X_{2}X_{1} = 0$. If $R$ has a unity, then $X_{1}\bigcap X_{2} = \left(X_{1}\bigcap X_{2}\right)R = \left(X_{1}\bigcap X_{2}\right)(X_{1}+X_{2})\subseteq X_{2}X_{1} +X_{1}X_{2} = 0$, which implies that $R\cong R/X_{1}\bigoplus R/X_{2}$.
\end{proof}

\begin{rem}\label{1}Suppose that every graded ideal of a graded ring $R$ is graded weakly prime. By Proposition \ref{Proposition 1(1)} and Corollary \ref{Corollary 1(1)}, any nontrivial idempotent graded ideal of $R$ is a graded prime ideal. Recall that the intersection of all graded prime ideals of a graded ring $R$ is called the graded prime radical of $R$. We denote the graded prime radical of $R$ by $GP(R)$, and the sum of all graded ideals whose square is zero by $GN(R)$.
\end{rem}

\begin{thm}\label{Theorem 1(1)}Let $R$ be a graded ring such that every graded ideal of $R$ is graded weakly prime and $R^{2} = R$. Then $GP(R) = GN(R)$ and $(GP(R))^{2} = (GN(R))^{2} = 0$.
\end{thm}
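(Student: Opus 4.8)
The plan is to prove the two inclusions $GN(R)\subseteq GP(R)$ and $GP(R)\subseteq GN(R)$ separately, extracting the vanishing of the squares along the way. Throughout I would use freely the dichotomy of Corollary \ref{Corollary 1(1)} (every graded ideal $A$ satisfies $A^{2}=A$ or $A^{2}=0$), the multiplication rule of Proposition \ref{Proposition 3(1)} ($PQ\in\{P,Q,0\}$ for graded ideals $P,Q$), and the observation recorded in Remark \ref{1} that every nonzero proper idempotent graded ideal is graded prime.

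First I would establish $(GN(R))^{2}=0$ directly. Writing $GN(R)=\sum_{i}A_{i}$ with each $A_{i}$ a graded ideal satisfying $A_{i}^{2}=0$, it suffices to check $A_{i}A_{j}=0$ for all $i,j$, since then $(GN(R))^{2}=\sum_{i,j}A_{i}A_{j}=0$. By Proposition \ref{Proposition 3(1)} we have $A_{i}A_{j}\in\{A_{i},A_{j},0\}$; if $A_{i}A_{j}=A_{i}$, then from $A_{i}A_{j}\subseteq A_{j}$ we get $A_{i}\subseteq A_{j}$, hence $A_{i}A_{j}\subseteq A_{j}^{2}=0$, forcing $A_{i}A_{j}=0$, and the case $A_{i}A_{j}=A_{j}$ is symmetric. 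In particular $GN(R)$ is itself a graded ideal of square zero. The inclusion $GN(R)\subseteq GP(R)$ is then immediate: if $A^{2}=0$ and $P$ is any graded prime ideal, then $AA=A^{2}=0\subseteq P$ gives $A\subseteq P$, so every square-zero graded ideal lies in the intersection $GP(R)$ of all graded prime ideals.

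The heart of the matter is the reverse inclusion, which I would obtain by proving $(GP(R))^{2}=0$; once this is known, $GP(R)$ is a square-zero graded ideal, so $GP(R)\subseteq GN(R)$, yielding $GP(R)=GN(R)$, and the equality $(GP(R))^{2}=(GN(R))^{2}=0$ follows from the first step. By Corollary \ref{Corollary 1(1)}, $(GP(R))^{2}=GP(R)$ or $(GP(R))^{2}=0$, so everything reduces to ruling out $(GP(R))^{2}=GP(R)\neq 0$. Assume this and set $P_{0}=GP(R)$. Then $P_{0}$ is a nonzero proper idempotent graded ideal, hence graded prime by Remark \ref{1}, and being the intersection of all graded prime ideals it is the \emph{smallest} graded prime ideal of $R$. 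Since $(GN(R))^{2}=0$ while $P_{0}^{2}=P_{0}\neq 0$ we cannot have $P_{0}=GN(R)$, so $GN(R)\subsetneq P_{0}$. If $GN(R)$ were graded prime, minimality would give $P_{0}\subseteq GN(R)$, a contradiction; hence $GN(R)$ is not graded prime, and there are graded ideals $I,J$ with $IJ\subseteq GN(R)$ but $I\nsubseteq GN(R)$ and $J\nsubseteq GN(R)$. Proposition \ref{Proposition 3(1)} forces $IJ=0$ (the options $IJ=I$ or $IJ=J$ would place $I$ or $J$ inside $GN(R)$), and Corollary \ref{Corollary 1(1)} then forces $I^{2}=I$ and $J^{2}=J$ (a square-zero factor would again lie in $GN(R)$); a short check rules out $I=R$ or $J=R$ (e.g. $I=R$ gives $J^{2}\subseteq RJ=IJ=0$). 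Thus $I,J$ are nonzero proper idempotent graded ideals, so graded prime by Remark \ref{1}, and minimality of $P_{0}$ gives $P_{0}\subseteq I$ and $P_{0}\subseteq J$. Finally $(I\cap J)^{2}\subseteq IJ=0$ shows $I\cap J$ is square-zero, so $P_{0}\subseteq I\cap J\subseteq GN(R)$, contradicting $GN(R)\subsetneq P_{0}$. This contradiction yields $(GP(R))^{2}=0$ and finishes the argument.

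I expect the last step to be the main obstacle: an idempotent prime radical is not excluded in general, and one must exploit the hypotheses precisely, through the interplay of Proposition \ref{Proposition 3(1)}, Corollary \ref{Corollary 1(1)} and the primeness supplied by Remark \ref{1}, to manufacture two idempotent primes $I,J$ lying above $P_{0}$ yet with $I\cap J\subseteq GN(R)$. A minor preliminary point to dispatch is that $GP(R)$ is a proper ideal at all, i.e. that $R$ possesses at least one graded prime ideal. This I would verify separately by noting that if $R\neq 0$ then $GN(R)\neq R$ (otherwise $R^{2}=(GN(R))^{2}=0$), and that the complete absence of graded prime ideals would make every proper graded ideal square-zero (by Corollary \ref{Corollary 1(1)} and Remark \ref{1}), forcing $GN(R)$ itself to be graded prime and contradicting that absence.
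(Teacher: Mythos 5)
Your proof is correct, and it rests on exactly the same pillars as the paper's: $(GN(R))^{2}=0$ giving $GN(R)\subseteq GP(R)$, the dichotomies of Proposition \ref{Proposition 1(1)}/Corollary \ref{Corollary 1(1)}, Proposition \ref{Proposition 3(1)}, and the fact from Remark \ref{1} that nontrivial idempotent graded ideals are graded prime. There are two local differences worth recording. First, for $(GN(R))^{2}=0$ you show each pairwise product $A_{i}A_{j}$ of square-zero summands vanishes via Proposition \ref{Proposition 3(1)}, whereas the paper notes that a finite sum of square-zero graded ideals is nilpotent and then invokes Corollary \ref{Corollary 1(1)}; both are fine. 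Second, and more substantively, the hard case is organized as mirror images: the paper assumes $GP(R)$ is graded prime and proves that $GN(R)$ must then also be graded prime (so that $GP(R)\subseteq GN(R)$ by definition of the graded prime radical), disposing of the subcase of two idempotent witnesses with the compressed line ``$I=I^{2}\subseteq IJ=0$''; you instead assume $(GP(R))^{2}=GP(R)\neq 0$, deduce that $GN(R)$ cannot be graded prime, extract the same witnesses $I,J$, and reach the contradiction through $GP(R)\subseteq I\cap J$ and $(I\cap J)^{2}\subseteq IJ=0$. Your route has the advantage of making explicit the minimality argument (every idempotent witness is graded prime, hence contains $GP(R)$) that the paper's inclusion $I^{2}\subseteq IJ$ leaves implicit, so it reads as a justified expansion of the paper's tersest step rather than a departure from it. Your treatment of the preliminary point that $R$ has at least one graded prime ideal also matches the paper's.
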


\begin{proof}Let $a, b\in GN(R)$. Then there exist finitely many square zero graded ideals $I_{1}, I_{2}, . . . , I_{k}$ such that $a, b\in I_{1} + I_{2} +...+ I_{k}$. Since $I_{j}^{2}= 0$ for each $j$, $(I_{1} + I_{2} +...+ I_{k})^{m} = 0$ for some $m$, but then $(I_{1} + I_{2} +...+ I_{k})^{2} = 0$ by Corollary \ref{Corollary 1(1)}. Hence, $(GN(R))^{2} = 0$. This implies that if $P$ is any graded prime ideal of $R$, $GN(R)\subseteq P$ and consequently, $GN(R)\subseteq GP(R)$. We should note that $R$ contains at least one graded prime ideal. Indeed, if $R$ contains a nonzero idempotent graded ideal, then by Remark \ref{1}, it should be graded prime. If every graded ideal is nilpotent, then since $R^{2} = R$, $GN(R)\neq R$ is a graded prime ideal. If $GP(R)$ is not a graded prime ideal, then $(GP(R))^{2} = 0$. This implies that $GP(R)\subseteq GN(R)$ by the definition of $GN(R)$, and hence the result follows. Suppose that $GP(R)$ is a graded prime ideal. In this case, we will show that $GN(R)$ should also be graded prime. This implies that $GP(R)\subseteq GN(R)$ by the definition of $GP(R)$, and hence the result follows. Suppose that $IJ\subseteq GN(R)$ for graded ideals $I$ and $J$ of $R$. Since $GN(R)$ is graded weakly prime, we have $J\subseteq GN(R)$ or $I\subseteq GN(R)$ provided that $IJ\neq 0$. Suppose that $IJ = 0$. If $I^{2} = 0$ or $J^{2} = 0$, then $J\subseteq GN(R)$ or $I\subseteq GN(R)$. If both $I$ and $J$ are not square zero, then they are graded prime ideals, but then $either I = I^{2}\subseteq IJ = 0$ or $J = J^{2}\subseteq IJ = 0$, a contradiction. Thus $GN(R)$ is a graded prime ideal, and hence the result follows.
\end{proof}

\begin{cor}\label{Corollary 3(1)}Let $R$ be a graded ring such that every graded ideal of $R$ is graded weakly prime. Then every nonzero graded ideal of $R/GN(R)$ is graded prime.
\end{cor}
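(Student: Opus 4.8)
The plan is to work entirely inside $R$ and then push the conclusion through the canonical projection $\pi : R \to \bar R := R/GN(R)$. First I would set up the bookkeeping: since any sum of graded ideals is again a graded ideal (cf. Lemma \ref{B}), $GN(R)$ is itself a graded ideal, so $\bar R$ is a graded ring with $\bar R_g = (R_g + GN(R))/GN(R)$, and $\pi$ induces an inclusion-preserving, product-respecting bijection between the graded ideals of $\bar R$ and the graded ideals of $R$ that contain $GN(R)$. Under this bijection a nonzero graded ideal $\bar P$ of $\bar R$ corresponds to a graded ideal $P$ of $R$ with $GN(R) \subsetneq P$; I read the statement as concerning proper such $\bar P$, since the whole ring is never a graded prime ideal.

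The heart of the argument is to show that every such $P$ is idempotent. By Corollary \ref{Corollary 1(1)} we have the dichotomy $P^2 = P$ or $P^2 = 0$. In the second case $P$ would be a square-zero graded ideal, and by the very definition of $GN(R)$ as the sum of all graded ideals of square zero, each such ideal is one of the summands, so $P \subseteq GN(R)$; this contradicts $GN(R) \subsetneq P$. Hence $P^2 = P \neq 0$. Now $P$ is a nonzero proper idempotent graded ideal, and since by hypothesis every graded ideal of $R$ is graded weakly prime, Remark \ref{1} (which repackages Proposition \ref{Proposition 1(1)} together with Corollary \ref{Corollary 1(1)}) yields that $P$ is a graded prime ideal of $R$.

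It remains to transfer primeness across $\pi$. Given graded ideals $\bar I, \bar J$ of $\bar R$ with $\bar I\,\bar J \subseteq \bar P$, I would lift them to graded ideals $I, J \supseteq GN(R)$ of $R$; then $\bar I\,\bar J \subseteq \bar P$ unwinds to $IJ + GN(R) \subseteq P$, and because $GN(R) \subseteq P$ this says exactly $IJ \subseteq P$. Primeness of $P$ then forces $I \subseteq P$ or $J \subseteq P$, i.e. $\bar I \subseteq \bar P$ or $\bar J \subseteq \bar P$; and $\bar P$ is proper because $P$ is. Thus $\bar P$ is graded prime, as required.

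I expect the only real obstacles to be definitional rather than computational. The first is the improper case $P = R$, which must be excluded by interpreting ``nonzero graded ideal'' as ``nonzero proper graded ideal''. The second is verifying that the graded-ideal formulation of primeness (phrased via products $IJ$ of graded ideals) genuinely passes through the quotient; this is the product-respecting part of the correspondence and is routine once one checks that $\pi(IJ) = \bar I\,\bar J$ and that lifts of graded ideals of $\bar R$ are graded ideals of $R$ containing $GN(R)$. I would emphasise that, in contrast to Theorem \ref{Theorem 1(1)}, this argument never uses $R^2 = R$: the implication $P^2 = 0 \Rightarrow P \subseteq GN(R)$ depends only on the definition of $GN(R)$, so the corollary holds without that restriction.
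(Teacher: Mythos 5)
Your proposal is correct and follows exactly the route the paper intends (the corollary is stated without proof as a consequence of Corollary \ref{Corollary 1(1)} and Remark \ref{1}): a graded ideal of $R/GN(R)$ lifts to a graded ideal $P\supseteq GN(R)$ of $R$, which cannot be square-zero unless it lies inside $GN(R)$, hence is idempotent and therefore graded prime, and primeness descends through the correspondence. Your observations that ``nonzero'' must be read as ``nonzero proper'' and that $R^{2}=R$ is not needed are accurate and worth keeping.
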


\begin{cor}\label{Corollary 4(1)}Let $R$ be a graded ring such that every graded ideal of $R$ is graded weakly prime. Then $(GN(R))^{2} = 0$ and every graded prime ideal contains $GN(R)$. In fact, there are three possibilities:
\begin{enumerate}
\item $GN(R) = R$.

\item $GN(R) = GP(R)$ is the smallest graded prime ideal and all other graded prime ideals are idempotent. If $GN(R)\neq 0$, then it is the only non-idempotent graded prime ideal.

\item $GN(R) = GP(R)$ is not a graded prime ideal.
\end{enumerate}
\end{cor}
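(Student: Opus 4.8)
The plan is to reduce everything to the dichotomy $R^{2} = R$ or $R^{2} = 0$, which follows at once from Corollary \ref{Corollary 1(1)} applied to the graded ideal $R$ itself. I would first dispose of the case $R^{2} = 0$: here $R$ is a graded ideal of itself with zero square, so $R \subseteq GN(R)$ and hence $GN(R) = R$, which is possibility (1). In this case $(GN(R))^{2} = R^{2} = 0$ is immediate, and one checks that $R$ has no proper graded prime ideal (if $P$ were one, then $RR = R^{2} = 0 \subseteq P$ would force $R \subseteq P$, contradicting properness), so the assertion that every graded prime ideal contains $GN(R)$ holds trivially.

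The remaining case is $R^{2} = R$, where Theorem \ref{Theorem 1(1)} gives directly $GP(R) = GN(R)$ together with $(GN(R))^{2} = 0$. Since $GP(R)$ is by definition the intersection of all graded prime ideals of $R$, every graded prime ideal contains $GP(R) = GN(R)$, establishing the two opening claims. It then remains only to separate possibilities (2) and (3) according to whether the ideal $GN(R) = GP(R)$ is itself graded prime.

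If $GN(R) = GP(R)$ is graded prime, then, being the intersection of all graded prime ideals and being prime, it is the smallest graded prime ideal; this is possibility (2). To see that every other graded prime ideal $Q$ is idempotent, I would invoke Corollary \ref{Corollary 1(1)}, which forces $Q^{2} = Q$ or $Q^{2} = 0$; in the latter situation the definition of $GN(R)$ gives $Q \subseteq GN(R)$, while $GN(R) \subseteq Q$ since $Q$ is prime, so $Q = GN(R)$, a contradiction. Hence $Q^{2} = Q$. Finally, if $GN(R) \neq 0$, then $(GN(R))^{2} = 0 \neq GN(R)$ shows that $GN(R)$ is not idempotent, and by what was just proved it is the unique non-idempotent graded prime ideal. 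If instead $GN(R) = GP(R)$ is not graded prime, we are in possibility (3), and nothing further needs to be shown.

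The main obstacle I anticipate is the bookkeeping in the case $R^{2} = 0$: one must confirm that the conventions for $GP(R)$ and $GN(R)$ are consistent with possibility (1) when no proper graded prime ideal exists, and that the blanket claims $(GN(R))^{2} = 0$ and ``every graded prime ideal contains $GN(R)$'' degenerate correctly. The genuinely structural content, namely the idempotency of all graded prime ideals other than $GN(R)$ in possibility (2), is then a short argument resting on Corollary \ref{Corollary 1(1)} and the minimality of $GP(R)$.
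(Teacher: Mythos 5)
Your proof is correct and follows essentially the same route as the paper: split on $R^{2}=0$ versus $R^{2}=R$ (via Corollary \ref{Corollary 1(1)}), handle the first case by noting $GN(R)=R$ and the vacuous absence of graded prime ideals, and invoke Theorem \ref{Theorem 1(1)} for the second. The only difference is that you spell out the idempotency of the graded prime ideals other than $GN(R)$, which the paper dismisses as ``evident''; your argument for that step is a correct and worthwhile elaboration.
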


\begin{proof}If $R^{2} = 0$, then $GN(R) = R$. So clearly $(GN(R))^{2} = 0$, and there are no graded prime ideals. If $R^{2} = R$, then by Theorem \ref{Theorem 1(1)}, $GP(R) = GN(R)$ and $(GP(R))^{2} = (GN(R))^{2} = 0$. By the definition of $GP(R)$, every graded prime ideal contains $GN(R) = GP(R)$. If $GN(R) = GP(R)$ is graded prime, it is evidently the smallest graded prime ideal and all other graded prime ideals are idempotent.
\end{proof}

\begin{ques}Let $R$ be a graded ring such that every graded ideal of $R$ is graded weakly prime. If $GN(R) = GP(R)$ is not a graded prime ideal, what could we conclude about $GN(R)$?.
\end{ques}

Assume that $M$ is a left $R$-module. Then $M$ is said to be $G$-graded if
$M=\displaystyle\bigoplus_{g\in G}M_{g}$ with $R_{g}M_{h}\subseteq M_{gh}$ for
all $g,h\in G$ where $M_{g}$ is an additive subgroup of $M$ for all $g\in G$.
The elements of $M_{g}$ are called homogeneous of degree $g$. It is clear that
$M_{g}$ is an $R_{e}$-submodule of $M$ for all $g\in G$. We assume that
$h(M)=\displaystyle\bigcup_{g\in G}M_{g}$. Let $N$ be an $R$-submodule of a
graded $R$-module $M$. Then $N$ is said to be graded $R$-submodule if
$N=\displaystyle\bigoplus_{g\in G}(N\cap M_{g})$, i.e., for $x\in N$,
$x=\displaystyle\sum_{g\in G}x_{g}$ where $x_{g}\in N$ for all $g\in G$. It is
known that an $R$-submodule of a graded $R$-module need not be graded.

Let $M$ be a left $R$-module. The idealization $R(+)M=\left\{  (r,m):r\in
R\mbox{ and }m\in M\right\}  $ of $M$ is a ring with componentwise
addition and multiplication; $(x,m_{1})+(y,m_{2})=(x+y,m_{1}+m_{2})$ and
$(x,m_{1})(y,m_{2})=(xy,xm_{2}+ym_{1})$ for each $x,y\in R$ and $m_{1}%
,m_{2}\in M$. Let $G$ be an abelian group and $M$ be a $G$-graded $R$-module.
Then $X=R(+)M$ is $G$-graded by $X_{g}=R_{g}(+)M_{g}$ for all $g\in G$. Note
that, $X_{g}$ is an additive subgroup of $X$ for all $g\in G$. Also, for
$g,h\in G$, $X_{g}X_{h}=(R_{g}(+)M_{g})(R_{h}(+)M_{h})=(R_{g}R_{h},R_{g}%
M_{h}+R_{h}M_{g})\subseteq(R_{gh},M_{gh}+M_{hg})\subseteq(R_{gh}%
,M_{gh})=X_{gh}$ as $G$ is abelian\ \cite{RaTeShKo}.

\begin{lem}\label{5}Let $G$ be an abelian group, $M$ be a $G$-graded $R$-module, $P$ be
an ideal of $R$ and $N$ be an $R$-submodule of $M$ such that $PM\subseteq N$.
Then $P(+)N$ is a graded ideal of $R(+)M$ if and only if $P$ is a graded ideal
of $R$ and $N$ is a graded $R$-submodule of $M$.
\end{lem}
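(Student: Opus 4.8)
The plan is to reduce everything to a single structural observation about how homogeneous components behave in the idealization $X=R(+)M$. Since $X_{g}=R_{g}(+)M_{g}$ and $X=\bigoplus_{g\in G}X_{g}$, for any element $(r,m)\in X$ with $r=\sum_{g\in G}r_{g}$ in $R=\bigoplus_{g}R_{g}$ and $m=\sum_{g\in G}m_{g}$ in $M=\bigoplus_{g}M_{g}$, the $g$-homogeneous component of $(r,m)$ is exactly $(r_{g},m_{g})\in X_{g}$. I would state and verify this first, since uniqueness of the homogeneous decomposition in $X$ then forces the decomposition to be computed coordinatewise. As a preliminary I would also record that $P(+)N$ is genuinely an ideal of $X$: the only nonobvious closure condition involves $(r,m)(p,n)=(rp,rn+pm)$, and here $rn\in N$ because $N$ is a submodule while $pm\in PM\subseteq N$ by hypothesis, so the assumption $PM\subseteq N$ is precisely what makes $P(+)N$ an ideal.

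For the forward implication, I would assume $P(+)N$ is a graded ideal. To show $P$ is graded, take $p\in P$ and look at $(p,0)\in P(+)N$. By the coordinatewise decomposition its homogeneous components are the elements $(p_{g},0)$, and since $P(+)N$ is graded each $(p_{g},0)$ must lie in $P(+)N$, forcing $p_{g}\in P$ for every $g$; hence $p=\sum_{g}p_{g}$ with all $p_{g}\in P$, so $P$ is graded. The argument for $N$ is symmetric: applying the same reasoning to $(0,n)$ for $n\in N$ shows each component $n_{g}$ lies in $N$, so $N$ is a graded $R$-submodule.

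For the converse, I would assume $P$ is a graded ideal and $N$ is a graded $R$-submodule. Given $(p,n)\in P(+)N$, write $p=\sum_{g}p_{g}$ with each $p_{g}\in P$ and $n=\sum_{g}n_{g}$ with each $n_{g}\in N$. By the coordinatewise decomposition the homogeneous components of $(p,n)$ are the elements $(p_{g},n_{g})$, and each of these lies in $P(+)N$; thus every element of $P(+)N$ is a sum of homogeneous elements of $P(+)N$, i.e. $P(+)N=\bigoplus_{g}\bigl((P(+)N)\cap X_{g}\bigr)$, which is the definition of a graded ideal.

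The computations are all routine; the only point requiring care---and the step I would treat as the crux---is the coordinatewise decomposition together with the uniqueness of homogeneous components in a graded structure. Once that is pinned down, both directions follow by merely reading off components, and it is worth remarking that the hypothesis $PM\subseteq N$ plays no role in the equivalence of gradedness itself but is needed only so that $P(+)N$ qualifies as an ideal in the first place.
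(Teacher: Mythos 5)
Your argument is correct. The paper itself does not give a direct proof: it simply writes ``Follows from Proposition 3.3 of \cite{RaTeShKo}'', deferring the entire content to that external result. Your proposal replaces the citation with a self-contained verification, and the key step you isolate --- that the $g$-homogeneous component of $(r,m)$ in $X=R(+)M$ is exactly $(r_g,m_g)$, by uniqueness of decompositions in the direct sum $X=\bigoplus_g X_g$ with $X_g=R_g(+)M_g$ --- is precisely the right structural fact; once it is in place, both implications do reduce to reading off components, as you say. Your preliminary check that $P(+)N$ is an ideal at all (with $rp\in P$, $rn\in N$, and $pm\in PM\subseteq N$ in the product $(r,m)(p,n)=(rp,rn+pm)$) is also correct, and your closing remark that the hypothesis $PM\subseteq N$ enters only there, not in the equivalence of gradedness, is an accurate and worthwhile observation that the paper's one-line proof leaves implicit. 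What the direct proof buys is transparency and independence from \cite{RaTeShKo}; what the citation buys is brevity. The only thing you might add for completeness is a word on why the decomposition $(r,m)=\sum_g(r_g,m_g)$ is the homogeneous decomposition, namely that each summand lies in $X_g$ and the sum $\bigoplus_g X_g$ is direct because $\bigoplus_g R_g$ and $\bigoplus_g M_g$ are, but this is exactly the verification you flag as the crux, so no genuine gap remains.
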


\begin{proof}
Follows from \cite[Proposition 3.3]{RaTeShKo}.
\end{proof}

\begin{exa}\label{Example 5(1)}Let $G$ be an abelian group and $M$ be a $G$-graded $R$-module. Consider the $G$-graded ring $R(+)M$ whose graded ideals are precisely of the form $P(+)N$ where $P$ is a graded ideal of $R$ and $N$ is a graded $R$-submodule of $M$ containing $PM$.
\begin{enumerate}
\item Let $R$ be a graded prime ring that contains exactly one nonzero proper graded ideal $P$. Then every graded ideal of $S_{1} = R(+) P$ is graded weakly prime: the graded maximum ideal $P_{1} = P (+)P$ is idempotent and the nonzero minimal graded ideal $P_{2} = 0 (+)P$ is nilpotent, both of which are graded prime.

\item Every graded ideal of $S_{2} = S_{1}(+)P_{2}$ is graded weakly prime: The graded maximum ideal $Q_{1} =P_{1}(+)P_{2}$ is idempotent and the three nonzero nilpotent graded ideals are $Q_{2} = P_{2}(+)P_{2}$, $Q_{3} = 0(+) P_{2}$, and $Q_{4} = P_{2}(+)0$.
\end{enumerate}
\end{exa}

We introduce an example that gives a nonzero idempotent graded weakly prime right ideal that is not graded prime, and this is unlike the case of graded weakly prime two sided ideals.

\begin{exa}Let $K$ be a field, consider the ring $R=\left(\begin{array}{cc}
                                                            K & K \\
                                                            0 & K
                                                          \end{array}
\right)$ and $G=\mathbb{Z}_{4}$. Then $R$ is $G$-graded by $R_{0}=\left(\begin{array}{cc}
                                                                          K & 0 \\
                                                                          0 & K
                                                                        \end{array}
\right)$, $R_{2}=\left(\begin{array}{cc}
                         0 & K \\
                         0 & 0
                       \end{array}
\right)$ and $R_{1}=R_{3}=0$. Then the graded right ideal $P=\left(\begin{array}{cc}
                                                                         0 & 0 \\
                                                                         0 & K
                                                                       \end{array}
\right)$ of $R$ is graded weakly prime and $P^{2}=P\neq0$. But $P$ is not graded prime right ideal of $R$.
\end{exa}

It should be noted that a proper graded ideal $P$ with property that $P^{2} =\{0\}$ need not be graded weakly prime; see the following example:

\begin{exa}Consider the ring $R=\left(\begin{array}{cc}
                                                            \mathbb{Q} & \mathbb{R} \\
                                                            0 & \mathbb{Q}
                                                          \end{array}
\right)$ and $G=\mathbb{Z}_{4}$. Then $R$ is $G$-graded by $R_{0}=\left(\begin{array}{cc}
                                                                          \mathbb{Q} & 0 \\
                                                                          0 & \mathbb{Q}
                                                                        \end{array}
\right)$, $R_{2}=\left(\begin{array}{cc}
                         0 & \mathbb{R} \\
                         0 & 0
                       \end{array}
\right)$ and $R_{1}=R_{3}=0$. Then the graded ideal $P=\left(\begin{array}{cc}
                                                                         0 & \mathbb{R} \\
                                                                         0 & 0
                                                                       \end{array}
\right)$ of $G(R)$ satisfies $P^{2}=\{0\}$. But $P$ is not graded weakly prime ideal of $G(R)$, since $\left(\begin{array}{cc}
                                                                                                         0 & 0 \\
                                                                                                         0 & 0
                                                                                                       \end{array}
\right)\neq\left(\begin{array}{cc}
                   3 & 0 \\
                   0 & 0
                 \end{array}
\right)\left(\begin{array}{cc}
                                                            \mathbb{Q} & \mathbb{R} \\
                                                            0 & \mathbb{Q}
                                                          \end{array}
\right)\left(\begin{array}{cc}
                                                            0 & 2 \\
                                                            0 & 0
                                                          \end{array}
\right)\subseteq P$.
\end{exa}

\begin{defn}Let $R$ be a graded ring. A non-empty set $S\subseteq h(R)-\{0\}$ is called a graded weakly system if for graded ideals $I$ and $J$ of $R$ with $I\bigcap S\neq\emptyset$, $J\bigcap S\neq\emptyset$ and $IJ\neq0$, we have $IJ\bigcap S\neq\emptyset$.
\end{defn}

\begin{prop}\label{Lemma 3.2}For a proper graded ideal $P$ of $R$, $S = h(R)-P$ is a graded weakly system if and only if $P$ is a graded weakly prime
ideal of $R$.
\end{prop}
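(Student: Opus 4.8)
The plan is to prove both implications by translating the set-theoretic meeting condition ``$I\cap S\neq\emptyset$'' into the inclusion statement ``$I\nsubseteq P$'', after which each direction reduces to a short contrapositive argument. The engine of the whole proof is the following dictionary, valid for $S=h(R)-P$ and every graded ideal $I$ of $R$: because $P$ is a graded ideal, $I\cap S\neq\emptyset$ if and only if $I\nsubseteq P$. First I would verify this. If $I\cap S\neq\emptyset$, then $I$ contains a homogeneous element lying outside $P$, so trivially $I\nsubseteq P$. Conversely, if $I\nsubseteq P$, pick $a\in I-P$ and write $a=\sum_{g\in G}a_{g}$ with each $a_{g}\in I\cap R_{g}$, which is possible since $I$ is graded; were every $a_{g}$ in $P$, then $a=\sum_{g\in G}a_{g}\in P$, a contradiction, so some $a_{g}\in I\cap R_{g}$ lies outside $P$, giving $a_{g}\in I\cap S$. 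I would also record two auxiliary facts: that $S\neq\emptyset$, since $P$ is proper and $R$ is generated by its homogeneous elements, so some element of $h(R)$ lies outside $P$ (making $S$ a legitimate candidate for a graded weakly system); and that the product $IJ$ of two graded ideals is again a graded ideal, being generated by the homogeneous products $a_{g}b_{h}$, so the dictionary applies to $IJ$ as well.

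For the forward direction, assume $P$ is graded weakly prime and let $I,J$ be graded ideals with $I\cap S\neq\emptyset$, $J\cap S\neq\emptyset$, and $IJ\neq0$; I must show $IJ\cap S\neq\emptyset$. By the dictionary, $I\nsubseteq P$ and $J\nsubseteq P$. If $IJ\cap S=\emptyset$, then $IJ\subseteq P$ by the dictionary applied to the graded ideal $IJ$, whence $0\neq IJ\subseteq P$; weak primeness of $P$ forces $I\subseteq P$ or $J\subseteq P$, contradicting $I\nsubseteq P$ and $J\nsubseteq P$. Hence $IJ\cap S\neq\emptyset$, and $S$ is a graded weakly system.

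For the reverse direction, assume $S$ is a graded weakly system and let $I,J$ be graded ideals with $0\neq IJ\subseteq P$; I must show $I\subseteq P$ or $J\subseteq P$. Suppose not, so $I\nsubseteq P$ and $J\nsubseteq P$. By the dictionary, $I\cap S\neq\emptyset$ and $J\cap S\neq\emptyset$, while $IJ\neq0$ by hypothesis, so the graded weakly system property yields $IJ\cap S\neq\emptyset$, that is $IJ\nsubseteq P$, contradicting $IJ\subseteq P$. Therefore $I\subseteq P$ or $J\subseteq P$, and $P$ is graded weakly prime.

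I expect the only nonroutine step to be the dictionary lemma, specifically the use of the graded structure of both $P$ and $I$ to pass from a non-inclusion $I\nsubseteq P$ to the existence of a homogeneous witness in $I\cap S$; once this is in hand, both implications are immediate contrapositives, with the nonzero-product hypothesis $IJ\neq0$ playing exactly the role that distinguishes the ``weakly'' variants from the ordinary prime and prime-system notions.
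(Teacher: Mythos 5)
Your proof is correct and takes essentially the same route as the paper's: both directions are the same short contrapositive arguments. The only difference is that you make explicit the ``dictionary'' $I\cap S\neq\emptyset\Leftrightarrow I\nsubseteq P$ for graded ideals $I$ (and its application to the graded ideal $IJ$), which the paper uses silently; this is a reasonable bit of added care but not a different approach.
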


\begin{proof}Suppose that $P$ is a graded weakly prime ideal of $R$. Let $I$ and $J$ be graded ideals in $R$ such that $I\bigcap S\neq\emptyset$, $J\bigcap S\neq\emptyset$ and $IJ\neq 0$. If $IJ\bigcap S=\emptyset$, then $IJ\subseteq P$. Since $P$ is graded weakly prime, and $IJ\neq 0$, $I\subseteq P$ or $J\subseteq P$. It follows that $I\bigcap S =\emptyset$ or $J\bigcap S =\emptyset$, which is a contradiction. Therefore, $S$ is a graded weakly system in $R$. Conversely, suppose that $IJ\subseteq P$ and $IJ\neq 0$, where $I$ and $J$ are graded ideals of $R$. If $I\nsubseteq P$ and $J\nsubseteq P$, then $I\bigcap S\neq\emptyset$ and $J\bigcap S\neq\emptyset$. Since $S$ is a graded weakly system, $IJ\bigcap S\neq\emptyset$, which is a contradiction. Therefore, $P$ is a graded weakly prime ideal of $R$.
\end{proof}

\begin{prop}\label{Proposition 3.4}Let $S$ be a graded weakly system, and let $P$ a graded ideal of $R$ maximal with respect to the property that $P$ is disjoint from $S$. Then P is a graded weakly prime ideal.
\end{prop}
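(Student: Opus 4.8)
The plan is to argue by contradiction, and the one genuinely load-bearing idea is that one should not work with $I$ and $J$ directly but rather with the enlarged graded ideals $I+P$ and $J+P$, so that the maximality hypothesis on $P$ can be brought to bear.

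First I would record that $P$ is a proper graded ideal: since $S$ is nonempty with $S\subseteq h(R)-\{0\}$ and $P\cap S=\emptyset$, we cannot have $P=R$ (otherwise $S=P\cap S=\emptyset$). So ``graded weakly prime'' is a meaningful claim about $P$. Now suppose $I$ and $J$ are graded ideals of $R$ with $0\neq IJ\subseteq P$, and assume toward a contradiction that $I\nsubseteq P$ and $J\nsubseteq P$. By Lemma \ref{B}(1), $I+P$ and $J+P$ are graded ideals of $R$, and they strictly contain $P$ because $I\nsubseteq P$ and $J\nsubseteq P$. By the maximality of $P$ among graded ideals disjoint from $S$, any graded ideal properly containing $P$ must meet $S$; hence $(I+P)\cap S\neq\emptyset$ and $(J+P)\cap S\neq\emptyset$.

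Next I would compute the product $(I+P)(J+P)=IJ+IP+PJ+P^{2}$. Since $IJ\subseteq P$ and $P$ is an ideal (so $IP,PJ,P^{2}\subseteq P$), we get $(I+P)(J+P)\subseteq P$, and in particular $(I+P)(J+P)$ is disjoint from $S$. On the other hand, every product $ab$ with $a\in I$, $b\in J$ lies in $(I+P)(J+P)$, so $IJ\subseteq(I+P)(J+P)$; as $IJ\neq0$, this forces $(I+P)(J+P)\neq0$. Now the two ideals $I+P$ and $J+P$ both meet $S$ and their product is nonzero, so the defining property of the graded weakly system $S$ yields $(I+P)(J+P)\cap S\neq\emptyset$, contradicting the fact that $(I+P)(J+P)\subseteq P$ is disjoint from $S$. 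Therefore $I\subseteq P$ or $J\subseteq P$, and $P$ is a graded weakly prime ideal.

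I do not expect a serious obstacle here; every inclusion is routine ideal arithmetic, and existence of such a maximal $P$ is part of the hypothesis rather than something to be proved. The only point requiring care is the nonvanishing of the product: one must keep track of the condition $IJ\neq0$ throughout, since it is precisely this nonzero hypothesis (mirroring the ``$IJ\neq0$'' clause in the definition of a graded weakly system) that distinguishes the weakly prime situation from the ordinary prime one and makes the application of the weakly system property legitimate.
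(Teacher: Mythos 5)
Your proof is correct and follows essentially the same route as the paper: pass to the graded ideals $I+P$ and $J+P$, use maximality to conclude each meets $S$, note $0\neq IJ\subseteq (I+P)(J+P)\subseteq P$, and derive a contradiction with the graded weakly system property. Your version is slightly more carefully written (you verify properness of $P$ and spell out the product expansion), but there is no substantive difference.
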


\begin{proof}Suppose that $0\neq IJ\subseteq P$, where $I$ and $J$ are graded ideals of $R$. If $I\nsubseteq P$ and $J\nsubseteq P$, then by the maximal property of $P$, we have, $(P +I)\bigcap S\neq\emptyset$ and $(P +J)\bigcap S\neq\emptyset$. Furthermore, $0\neq IJ\subseteq (P +I)(P +J)\subseteq P$. Thus, since $S$ is a graded weakly system, $(P + I)(P + J)\bigcap S\neq\emptyset$ and it follows that $(P + I)(P + J)\nsubseteq P$. For this to happen, we should have $IJ\nsubseteq P$, which is a contradiction. Thus, $P$ should be a graded weakly prime ideal.
\end{proof}

\begin{defn}\label{Definition 3.5}Let $R$ be a graded ring. For a graded ideal $I$ of $R$, if there is a graded weakly prime ideal containing $I$, then we define

\begin{center}$GW(I)= \left\{a\in h(R):\mbox{ every graded weakly system containing }a \mbox{ meets }I\right\}$.\end{center} If there is no graded weakly prime ideal containing $I$, then we put $GW(I) = R$.
\end{defn}

For a graded ideal $I$ of $R$, observe that $I$ and $GW(I)$ are contained in precisely the same graded weakly prime ideals of $R$.

\begin{thm}\label{Theorem 3.6}Let $I$ be a graded ideal of $R$. Then either $GW(I) = R$ or $GW(I)$ equals the intersection of all graded weakly prime ideals of $R$ containing $I$.
\end{thm}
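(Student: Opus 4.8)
The plan is to dispose of the first alternative at once and then, assuming some graded weakly prime ideal contains $I$, to establish the equality by a double inclusion, where throughout $P$ ranges over the graded weakly prime ideals of $R$ containing $I$ and $\mathcal{P}$ denotes their intersection. The first alternative is immediate: if no graded weakly prime ideal contains $I$, then $GW(I) = R$ directly from Definition \ref{Definition 3.5}. So from now on I assume the family of graded weakly prime ideals containing $I$ is nonempty. Since $GW(I) \subseteq h(R)$ by definition while $\mathcal{P}$ is a graded ideal, the comparison is carried out at the level of homogeneous elements, so that the asserted equality identifies $GW(I)$ with the homogeneous part of $\mathcal{P}$.

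For the inclusion $GW(I) \subseteq \mathcal{P}$, I would argue by contradiction. Suppose $a \in GW(I)$ but $a \notin P$ for some graded weakly prime ideal $P \supseteq I$. Then $a$ lies in $S = h(R) - P$, which by Proposition \ref{Lemma 3.2} is a graded weakly system precisely because $P$ is graded weakly prime. As $I \subseteq P$, this system $S$ is disjoint from $I$, so $S$ is a graded weakly system containing $a$ that fails to meet $I$, contradicting $a \in GW(I)$. Hence $a \in P$ for every such $P$, giving $a \in \mathcal{P}$.

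The reverse inclusion is the substantive direction and rests on a maximality argument. Let $a$ be a homogeneous element lying in every graded weakly prime ideal containing $I$, and let $S$ be an arbitrary graded weakly system with $a \in S$; I must show $S \cap I \neq \emptyset$. Suppose instead $S \cap I = \emptyset$. The family of graded ideals that contain $I$ and are disjoint from $S$ is then nonempty, as it contains $I$ itself, and since the union of a chain of graded ideals is again a graded ideal, a union of such a chain remains disjoint from $S$ and contains $I$; Zorn's lemma therefore supplies a member $P$ maximal in this family. Because any graded ideal strictly larger than $P$ still contains $I$, this $P$ is in fact maximal among \emph{all} graded ideals disjoint from $S$, so Proposition \ref{Proposition 3.4} applies and $P$ is graded weakly prime. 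Since $I \subseteq P$, the standing hypothesis forces $a \in P$, which is impossible because $a \in S$ while $P \cap S = \emptyset$. Thus every graded weakly system containing $a$ meets $I$, that is, $a \in GW(I)$.

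The main obstacle is this reverse inclusion, and specifically the passage from maximality within the restricted family of graded ideals that contain $I$ and avoid $S$ to the maximality required by Proposition \ref{Proposition 3.4}, namely maximality among \emph{all} graded ideals disjoint from $S$; the upgrade is legitimate only because the property $P \supseteq I$ is preserved under enlargement, so no strictly larger $S$-avoiding graded ideal can escape the restricted family. A secondary point to verify carefully is that the union of a chain of graded ideals is itself graded, so that Zorn's lemma is invoked within the correct class, and that the homogeneity bookkeeping is consistent, since $GW(I)$ is by definition a set of homogeneous elements.
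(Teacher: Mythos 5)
Your proof is correct and follows essentially the same route as the paper's: the forward inclusion via the graded weakly system $h(R)-P$ supplied by Proposition \ref{Lemma 3.2}, and the reverse inclusion (which the paper phrases contrapositively) via Zorn's Lemma and Proposition \ref{Proposition 3.4}. The extra care you take with the maximality upgrade and with $GW(I)$ consisting only of homogeneous elements addresses points the paper glosses over, but the underlying argument is identical.
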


\begin{proof}Suppose that $GW(I)\neq R$. This means that

$\left\{P : P \mbox{ is a graded weakly prime ideal of }R\mbox{ and }I\subseteq P\right\}\neq\emptyset$. We first prove that $GW(I)\subseteq \left\{P : P \mbox{ is a graded weakly prime ideal of }R\mbox{ and }I\subseteq P\right\}$. Let $m\in GW(I)$ and $P$ be any graded weakly prime ideal of $R$ containing $I$. Consider the graded weakly system $h(R)-P$. This graded weakly system cannot contain $m$, for otherwise it meets $I$ and hence also $P$. Therefore, we have $m\in P$. Conversely, assume that $m\notin GW(I)$. Then, by Definition \ref{Definition 3.5}, there exists a graded weakly system $S$ containing $m$ which is disjoint from $I$. By Zorn's Lemma, there exists a graded ideal $P\supseteq I$ which is maximal with respect to being disjoint from $S$. By Proposition \ref{Proposition 3.4}, $P$ is a graded weakly prime ideal of $R$ and we have $m\notin P$, as needed.
\end{proof}

\begin{exa}\label{Example 3.8}Consider the ring $R=\left\{\left(\begin{array}{cc}
                                                                  x & y \\
                                                                  0 & 0
                                                                \end{array}
\right):x, y\in \mathbb{Z}_{4}, b\in \{0, 2\}\right\}$ and $G=\mathbb{Z}_{4}$. Then $R$ is $G$-graded by $R_{0}=\left(\begin{array}{cc}
                                                                                                                        x & 0 \\
                                                                                                                        0 & 0
                                                                                                                      \end{array}
\right)$, $R_{2}=\left(\begin{array}{cc}
                         0 & y \\
                         0 & 0
                       \end{array}
\right)$ and $R_{1}=R_{3}=0$. $R$ has two proper graded ideals $P_{1}=\left\{\left(\begin{array}{cc}
                                                                                              0 & 0 \\
                                                                                              0 & 0
                                                                                            \end{array}
\right), \left(\begin{array}{cc}
                 0 & 2 \\
                 0 & 0
               \end{array}
\right)\right\}$ and $P_{2}=\left\{\left(\begin{array}{cc}
                                                                                              0 & 0 \\
                                                                                              0 & 0
                                                                                            \end{array}
\right), \left(\begin{array}{cc}
                 2 & 0 \\
                 0 & 0
               \end{array}
\right)\right\}$. $P_{1}$ is a graded weakly prime ideal which is not a graded prime ideal since $P_{2}P_{2}=\left(\begin{array}{cc}
                                                                                                                     0 & 0 \\
                                                                                                                     0 & 0
                                                                                                                   \end{array}
\right)\subseteq P_{1}$ but $P_{2}\nsubseteq P_{1}$. $GW(P_{1})=P_{1}$ and $GW(P_{2})=R$.
\end{exa}

\section{Graded Weakly Total Prime Ideals}

In this section, we introduce and study the concept of graded weakly total prime ideals. Recall that in \cite{Dawwas Bataineh Muanger}, a proper graded ideal $P$ of $R$ is a graded total prime ideal if $xy\in P$ implies $x\in P$ or $y\in P$ for $x, y\in h(R)$.

\begin{defn}\begin{enumerate}
\item A proper graded ideal $P$ of $R$ is graded weakly total prime if $0\neq xy\in P$ implies $x\in P$ or $y\in P$ for $x, y\in h(R)$.

\item Let $R$ be a $G$-graded ring, $P$ be a graded ideal of $R$ and $g\in G$ such that $P_{g}\neq R_{g}$. Then $P$ is said to be a $g$-weakly total prime ideal of $R$ if whenever $x, y\in R_{g}$ such that $0\neq xy\in P$, then either $x\in P$ or $y\in P$.
\end{enumerate}
\end{defn}

The following example shows that not every graded weakly prime ideal is a graded weakly total prime ideal.

\begin{exa}\label{Example 4.11}Consider the ring of $2\times2$ matrices with integer entries $R=M_{2}(\mathbb{Z})$ and $G=\mathbb{Z}_{4}$. Then $R$ is $G$-graded by $R_{0}=\left(\begin{array}{cc}
                  \mathbb{Z} & 0 \\
                  0 & \mathbb{Z}
                \end{array}
\right)$, $R_{2}=\left(\begin{array}{cc}
                  0 & \mathbb{Z} \\
                  \mathbb{Z} & 0
                \end{array}
\right)$ and $R_{1}=R_{3}=0$. Consider the graded ideal $P=M_{2}(2\mathbb{Z})$ of $R$. Clearly, $P$ is a graded prime ideal and hence also graded weakly prime ideal of $R$. On the hand, $P$ is not graded weakly total prime since $\left(\begin{array}{cc}
                  6 & 0 \\
                  0 & 3
                \end{array}
\right)\left(\begin{array}{cc}
                  0 & 1 \\
                  2 & 0
                \end{array}
\right)=\left(\begin{array}{cc}
                  0 & 6 \\
                  6 & 0
                \end{array}
\right)\in P$.
\end{exa}

The following three examples shows that not every graded weakly total prime ideal is a graded total prime ideal.

\begin{exa}\label{Example 4.12}Consider the ring $R=\left(\begin{array}{cc}
                  \mathbb{Z} & 2\mathbb{Z} \\
                  0 & \mathbb{Z}
                \end{array}
\right)$ and $G=\mathbb{Z}_{4}$. Then $R$ is $G$-graded by $R_{0}=\left(\begin{array}{cc}
                  \mathbb{Z} & 0 \\
                  0 & \mathbb{Z}
                \end{array}
\right)$, $R_{2}=\left(\begin{array}{cc}
                  0 & 2\mathbb{Z} \\
                  0 & 0
                \end{array}
\right)$ and $R_{1}=R_{3}=0$. Clearly, $\left\{\left(\begin{array}{cc}
                  0 & 0 \\
                  0 & 0
                \end{array}
\right)\right\}$ is a graded weakly total prime ideal of $R$ which is not graded total prime since $\left(\begin{array}{cc}
                  0 & 2 \\
                  0 & 0
                \end{array}
\right)\left(\begin{array}{cc}
                  0 & 2 \\
                  0 & 0
                \end{array}
\right)=\left(\begin{array}{cc}
                  0 & 0 \\
                  0 & 0
                \end{array}
\right)$.
\end{exa}

\begin{exa}\label{Example 4.13}the graded ideal $P_{1}$ in Example \ref{Example 3.8} is a graded weakly total prime ideal which is not graded total prime since $\left(\begin{array}{cc}
                  2 & 0 \\
                  0 & 0
                \end{array}
\right)\left(\begin{array}{cc}
                  2 & 0 \\
                  0 & 0
                \end{array}
\right)=\left(\begin{array}{cc}
                  0 & 0 \\
                  0 & 0
                \end{array}
\right)\in P_{1}$.
\end{exa}

\begin{exa}\label{Example 4.14}Let $M$ be a $G$-graded left $R$-module. Let $Z(R)$ be the set of all zero-divisors of $R$ and $(0 :_{R} M) =\left\{a\in R : aM = 0\right\}$ the annihilator of $M$ in $R$. Suppose that $0\neq Z(R)\subseteq (0 :_{R} M)$. Let $[R, M] =\left\{(a, m) : a\in R \mbox{ and }m\in M\right\}$ be the ring with componentwise addition and multiplication $(a, m)(b, n) = (ab, an)$. In fact, $[R, M]$ is $G$-graded ring by $[R, M]_{g}=[R_{g}, M_{g}]$ for all $g\in G$, one can observe that $[R, M]_{g}[R, M]_{h}=[R_{g}, M_{g}][R_{h}, M_{h}]=[R_{g}R_{h}, R_{g}M_{h}]\subseteq[R_{gh}, M_{gh}]=[R, M]_{gh}$ for all $g, h\in G$. Now $[0, M]$ is a graded ideal of $[R, M]$. In fact, it is a graded weakly total prime ideal, but not a graded total prime ideal.
\end{exa}

\begin{defn}\begin{enumerate}
\item Let $R$ be a graded ring, $x, y\in h(R)$ and $P$ be a graded weakly total prime ideal of $R$. We say that $(x, y)$ is a total homogeneous twin-zero of $P$ if $xy = 0$, $x\notin P$ and $y\notin P$.
    
\item Let $R$ be a graded ring, $P$ be a $g$-weakly total prime ideal of $R$ and $x, y\in R_{g}$. We say that $(x, y)$ is a $g$-total twin-zero of $P$ if $xy = 0$, $x\notin P$ and $y\notin P$.
\end{enumerate}
\end{defn}

Note that if $P$ is a graded weakly total prime (a $g$-weakly total prime) ideal of $R$ that is not a graded total prime (not a $g$-total prime) ideal, then $P$ has a total homogeneous twin-zero $(x, y)$ for some $x, y\in h(R)$ (a $g$-total twin-zero $(x, y)$ for some $x, y\in R_{g}$).

\begin{lem}\label{Lemma 4.3} Let $P$ be a $g$-weakly total prime ideal of $R$ and suppose that $(x, y)$ is a $g$-total twin-zero of $P$. Then $xP_{g} = P_{g}y = 0$.
\end{lem}

\begin{proof}Suppose that $xP_{g}\neq 0$. Then there exists $p\in P_{g}$ such that $xp\neq0$. Hence $x(y + p)\neq 0$. Since $x\notin P$ and $P$ is $g$-weakly total prime, we have $x + p\in P$, and hence $x\in P$, which is a contradiction. Thus $xP_{g} = 0$. Similarly, it can be easily verified that $P_{g}y = 0$.
\end{proof}

\begin{lem}\label{Lemma 4.4} Let $P$ be a $g$-weakly total prime ideal of $R$ and suppose that $(x, y)$ is a $g$-total twin-zero of $P$. If $xr\in P$ for some $r\in R_{g}$, then $xr = 0$.
\end{lem}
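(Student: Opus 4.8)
The plan is to exploit the fact that we already know $xP_g = P_g y = 0$ from Lemma~\ref{Lemma 4.4}'s immediate predecessor (Lemma~\ref{Lemma 4.3}), together with the twin-zero relation $xy = 0$. Suppose $xr \in P$ for some $r \in R_g$, and assume toward a contradiction that $xr \neq 0$. The natural move is to form the element $r + y \in R_g$ and consider the product $x(r+y)$. By distributivity, $x(r+y) = xr + xy = xr + 0 = xr$, so $x(r+y) = xr \neq 0$, and moreover $x(r+y) = xr \in P$ since $xr \in P$. Thus $0 \neq x(r+y) \in P$.

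Now I would invoke the $g$-weakly total prime hypothesis directly. Since $x, r+y \in R_g$ and $0 \neq x(r+y) \in P$, the definition of a $g$-weakly total prime ideal forces either $x \in P$ or $r + y \in P$. The first option $x \in P$ contradicts the assumption that $(x,y)$ is a $g$-total twin-zero (which requires $x \notin P$). Hence $r + y \in P$. Since $P$ is a graded ideal and $r + y$ is homogeneous of degree $g$, we have $r + y \in P_g$, and because $y \in R_g$ I need to pin down where $y$ sits; here the key observation is that $r+y \in P$ combined with knowing the status of $y$ relative to $P$ is what produces the contradiction.

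The main obstacle is handling the second alternative cleanly: $r + y \in P$ does not immediately say $y \in P$ unless I can subtract off $r$. The resolution is that $r+y \in P$ only tells me about the sum, but I can instead run the symmetric argument or use Lemma~\ref{Lemma 4.3} to short-circuit it. Specifically, once $r + y \in P$, since $P$ is an ideal we get $x(r+y) \in P$ trivially, but the sharper route is to note $r + y \in P_g$ (as $P$ is graded and $r+y \in R_g$), so $y = (r+y) - r$ would require $r \in P$, which we do not have. Therefore the correct contradiction comes from the first branch only: the assumption $xr \neq 0$ forces, via the weakly-total-prime property applied to $0 \neq x(r+y) \in P$, that $x \in P$ (since the alternative $r+y \in P$ must be excluded by a parallel use of the twin-zero structure). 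I expect the clean writeup to mirror Lemma~\ref{Lemma 4.3} almost verbatim: assume $xr \neq 0$, build $x(r+y) = xr \neq 0$ lying in $P$, conclude $x \in P$ by weak-total-primeness after ruling out $r+y \in P$, and contradict $x \notin P$. The subtlety to watch is ensuring the $r+y \in P$ branch genuinely cannot occur, which I would secure by observing that if $r + y \in P$ then, combined with $xy=0$ and the already-established $xP_g = 0$, one returns to the twin-zero hypothesis on $y \notin P$.
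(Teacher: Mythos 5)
Your detour through $r+y$ is unnecessary, and at the one point where your argument actually needs to close, it does not. The hypothesis already hands you $0\neq xr\in P$ with $x,r\in R_{g}$, so the definition of a $g$-weakly total prime ideal applies \emph{directly} to the pair $(x,r)$: either $x\in P$ or $r\in P$, and since $(x,y)$ is a $g$-total twin-zero we have $x\notin P$, hence $r\in P$. As $r\in P\cap R_{g}=P_{g}$, Lemma~\ref{Lemma 4.3} gives $xr\in xP_{g}=0$, contradicting $xr\neq 0$. That is the paper's whole proof; no auxiliary element is needed.

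Your version instead applies the definition to $x$ and $r+y$ and then must rule out the branch $r+y\in P$, which you correctly identify as ``the subtlety to watch'' --- but you never actually rule it out. The phrases ``must be excluded by a parallel use of the twin-zero structure'' and ``one returns to the twin-zero hypothesis on $y\notin P$'' are not arguments; note that $r+y\in P$ does \emph{not} let you conclude $y\in P$, since you do not know $r\in P$ at that stage, so there is no direct clash with $y\notin P$. The branch can in fact be killed, but by a different mechanism than the one you gesture at: if $r+y\in P$ then $r+y\in P\cap R_{g}=P_{g}$, so Lemma~\ref{Lemma 4.3} forces $x(r+y)\in xP_{g}=0$, while you have already computed $x(r+y)=xr+xy=xr\neq 0$. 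With that sentence inserted, your proof is correct; without it, the decisive step is missing. Either way, the direct application of the definition to $(x,r)$ is both shorter and avoids the case split entirely.
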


\begin{proof}Suppose that $0\neq xr$. Then $0\neq xr\in P$, and then $r\in P$ since $P$ is $g$-weakly total prime and $(x, y)$ is a $g$-total twin-zero of $P$. Now, since $xr\in xP$, we have that $xr = 0$ by Lemma \ref{Lemma 4.3}, which is a contradiction.
\end{proof}

\begin{thm}\label{Theorem 4.5} Let $R$ be a $G$-graded ring and $g\in G$. If $P$ is $g$-weakly total prime ideal of $R$ but not $g$-total prime, then $P_{g}^{2} = 0$.
\end{thm}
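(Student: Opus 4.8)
The plan is to exploit the existence of a $g$-total twin-zero together with Lemma \ref{Lemma 4.3}, and then run a short ``shifting'' argument to force every product of two elements of $P_g$ to vanish. Since $P$ is $g$-weakly total prime but not $g$-total prime, the remark following the definition of a twin-zero guarantees a $g$-total twin-zero $(x, y)$, so that $x, y\in R_g$, $xy = 0$, $x\notin P$, and $y\notin P$. The goal $P_g^2 = 0$ amounts to showing $pq = 0$ for arbitrary $p, q\in P_g$, so I would fix such $p$ and $q$ and work with them.

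First I would record the consequences of Lemma \ref{Lemma 4.3}: since $(x, y)$ is a $g$-total twin-zero, we have $xP_g = P_g y = 0$; in particular $xq = 0$ and $py = 0$ because $p, q\in P_g$. Next, I would consider the shifted elements $x + p$ and $y + q$, both of which lie in $R_g$ (as $x, y\in R_g$ and $P_g\subseteq R_g$), and expand
\[
(x + p)(y + q) = xy + xq + py + pq = pq,
\]
using $xy = 0$ together with $xq = py = 0$ from the previous step.

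The decisive step is then a contradiction argument. Suppose $pq\neq 0$. Because $p\in P_g\subseteq P$ and $P$ is an ideal, $pq\in P$, so $0\neq (x + p)(y + q) = pq\in P$ with $x + p, y + q\in R_g$. Applying the definition of $g$-weakly total prime yields $x + p\in P$ or $y + q\in P$; since $p, q\in P$, this forces $x\in P$ or $y\in P$, contradicting the twin-zero property. Hence $pq = 0$, and as $p, q\in P_g$ were arbitrary, $P_g^2 = 0$.

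I do not anticipate a serious obstacle here: the content is entirely carried by Lemma \ref{Lemma 4.3}, which kills the cross terms $xq$ and $py$, and by the observation that adding elements of $P$ does not change membership in $P$. The only point requiring care is the verification that $x + p$ and $y + q$ genuinely lie in the homogeneous component $R_g$ so that the $g$-weakly total prime hypothesis is applicable, and that $pq\in P$ so that the hypothesis $0\neq(x+p)(y+q)\in P$ is legitimately met; both are immediate from $P_g = P\cap R_g$. (One could alternatively route the final contradiction through Lemma \ref{Lemma 4.4}, but the direct appeal to the defining property of $P$ is cleaner.)
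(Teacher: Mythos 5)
Your proof is correct and follows essentially the same route as the paper: both take a $g$-total twin-zero $(x,y)$, invoke Lemma \ref{Lemma 4.3} to kill the cross terms so that $(x+p)(y+q)=pq$, and derive a contradiction from the $g$-weakly total prime hypothesis. The only difference is that you spell out the expansion and the membership checks ($x+p, y+q\in R_g$, $pq\in P$) that the paper leaves implicit.
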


\begin{proof}Let $(x, y)$ be a $g$-total twin-zero of $P$. Suppose that $pq\neq 0$ for some $p, q\in P_{g}$. Then by Lemma \ref{Lemma 4.3}, we have $0\neq (x + p)(y + q)\in P$. Thus $(x + p)\in P$ or $(y + q)\in P$ and hence $x\in P$ or $y\in P$, which is a contradiction. Therefore $P_{g}^{2} = 0$.
\end{proof}

\begin{cor}\label{Corollary 4.6} Let $R$ be a $G$-graded ring and let $P$ a graded ideal of $R$. If $P_{g}^{2}\neq 0$ for some $g\in G$, then $P$ is a $g$-total prime ideal of $R$ if and only if $P$ is a $g$-weakly total prime ideal of $R$.
\end{cor}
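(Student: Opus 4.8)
The plan is to prove the biconditional by establishing the two implications separately and observing that the hypothesis $P_g^2 \neq 0$ is only needed for one of them. The statement is a packaging of the preceding Theorem \ref{Theorem 4.5}, so I expect the corollary to follow with essentially no new work.

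First I would dispose of the direction that holds unconditionally: if $P$ is a $g$-total prime ideal of $R$, then $P$ is a $g$-weakly total prime ideal. This is immediate from comparing the two definitions. The defining condition for a $g$-total prime ideal requires the implication ``$x \in P$ or $y \in P$'' whenever $xy \in P$ with $x, y \in R_g$, whereas the $g$-weakly total prime condition only demands this conclusion under the stronger hypothesis $0 \neq xy \in P$. Thus the $g$-total prime condition is the logically stronger requirement, and any $g$-total prime ideal automatically satisfies the weaker one. Note that this implication does not use $P_g^2 \neq 0$ at all.

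For the converse I would argue by contraposition using Theorem \ref{Theorem 4.5}. Suppose $P$ is a $g$-weakly total prime ideal of $R$ with $P_g^2 \neq 0$, and suppose toward a contradiction that $P$ is not $g$-total prime. Then Theorem \ref{Theorem 4.5} applies directly and yields $P_g^2 = 0$, contradicting the standing hypothesis $P_g^2 \neq 0$. Hence $P$ must be $g$-total prime, which completes this direction.

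There is no genuine obstacle here: the substantive content lives entirely in Theorem \ref{Theorem 4.5}, whose proof already extracts a $g$-total twin-zero from the failure of the $g$-total prime property and then uses Lemma \ref{Lemma 4.3} to force $P_g^2 = 0$. The only point requiring care is bookkeeping about which definition is stronger, so that the unconditional implication is stated in the correct direction, and confirming that the hypothesis $P_g^2 \neq 0$ is invoked precisely where the contrapositive of Theorem \ref{Theorem 4.5} demands it.
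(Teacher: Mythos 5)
Your proof is correct and matches the paper's intent exactly: the paper gives no explicit proof of Corollary \ref{Corollary 4.6}, treating it as an immediate consequence of Theorem \ref{Theorem 4.5}, and your argument (the unconditional implication from the definitions plus the contrapositive of Theorem \ref{Theorem 4.5} for the converse) is precisely the intended one.
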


It should be noted that a proper graded ideal $P$ with property that $P^{2} =\{0\}$ need not be graded weakly total prime; see the following example:

\begin{exa}Consider the ring $R=\left(\begin{array}{cc}
                                                            \mathbb{Q} & \mathbb{R} \\
                                                            0 & \mathbb{Q}
                                                          \end{array}
\right)$ and $G=\mathbb{Z}_{4}$. Then $R$ is $G$-graded by $R_{0}=\left(\begin{array}{cc}
                                                                          \mathbb{Q} & 0 \\
                                                                          0 & \mathbb{Q}
                                                                        \end{array}
\right)$, $R_{2}=\left(\begin{array}{cc}
                         0 & \mathbb{R} \\
                         0 & 0
                       \end{array}
\right)$ and $R_{1}=R_{3}=0$. Then the graded ideal $P=\left(\begin{array}{cc}
                                                                         0 & \mathbb{R} \\
                                                                         0 & 0
                                                                       \end{array}
\right)$ of $R$ satisfies $P^{2}=\{0\}$. But $P$ is not graded weakly total prime ideal of $R$, since $\left(\begin{array}{cc}
                                                                                                         0 & 0 \\
                                                                                                         0 & 0
                                                                                                       \end{array}
\right)\neq\left(\begin{array}{cc}
                   3 & 0 \\
                   0 & 0
                 \end{array}
\right)\left(\begin{array}{cc}
                                                            0 & 2 \\
                                                            0 & 0
                                                          \end{array}
\right)=\left(\begin{array}{cc}
                       0 & 6 \\
                       0 & 0
                     \end{array}\right)
\in P$.
\end{exa}

\begin{prop}\label{Proposition 4.9}Let $P$ be a $g$-weakly total prime ideal of $R$. If $x\in R_{g}$ and $Y\subseteq R_{g}$ such that $0\neq xY\subseteq P$, then either $x\in P$ or $Y\subseteq P$.
\end{prop}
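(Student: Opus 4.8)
The plan is to argue by contradiction, assuming that $x\notin P$ and $Y\nsubseteq P$, and then to extract a single element $y\in Y$ that plays the role of a witness against the $g$-weakly total prime hypothesis. Since $Y\nsubseteq P$, there is some $y_{0}\in Y$ with $y_{0}\notin P$. The difficulty is that the product $xy_{0}$ might be zero even though $xY\neq 0$, so I cannot immediately conclude $0\neq xy_{0}\in P$ and apply the definition. This is exactly the twin-zero obstruction, and handling it is the main obstacle: I must rule out, or else exploit, the possibility that $(x,y_{0})$ is a $g$-total twin-zero of $P$.

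To organize the argument, first I would dispose of the easy case. If $xy_{0}\neq 0$ for some $y_{0}\in Y\setminus P$, then $0\neq xy_{0}\in xY\subseteq P$, so the $g$-weakly total prime property gives $x\in P$ or $y_{0}\in P$; since $y_{0}\notin P$ we get $x\in P$, contradicting our assumption. So the whole problem reduces to the case where $xy=0$ for every $y\in Y\setminus P$. Since $xY\neq 0$, there must be some $y_{1}\in Y$ with $xy_{1}\neq 0$, and by the previous line such a $y_{1}$ must lie in $P$, i.e.\ $y_{1}\in Y\cap P\subseteq P_{g}$.

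This is where Lemma~\ref{Lemma 4.3} does the work. Pick any $y_{0}\in Y\setminus P$ with $xy_{0}=0$; together with $x\notin P$ this makes $(x,y_{0})$ a $g$-total twin-zero of $P$ (note $y_{0}\in R_{g}$ since $Y\subseteq R_{g}$). Lemma~\ref{Lemma 4.3} then forces $xP_{g}=0$. But $y_{1}\in P_{g}$ and $xy_{1}\neq 0$, which directly contradicts $xP_{g}=0$. Hence the case $xy=0$ for all $y\in Y\setminus P$ is impossible, and combining the two cases yields $x\in P$ or $Y\subseteq P$, as required. I expect the only subtle point to be the careful verification that $y_{0}$ is genuinely a $g$-total twin-zero (so that Lemma~\ref{Lemma 4.3} applies), and that the element $y_{1}$ witnessing $xY\neq 0$ can be taken in $P_{g}$; both follow from the reduction above, so the proof is short once the twin-zero dichotomy is set up correctly.
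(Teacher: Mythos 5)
Your proof is correct and follows essentially the same route as the paper: the paper also assumes $x\notin P$, notes that any $y\in Y$ with $xy\neq 0$ must lie in $P$, and resolves the twin-zero case by invoking Lemma~\ref{Lemma 4.4} (whose proof is precisely your combination of the weakly-total-prime property with Lemma~\ref{Lemma 4.3}). You have merely inlined Lemma~\ref{Lemma 4.4}; the argument is the same.
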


\begin{proof}Suppose that $x\notin P$. For every $b\in Y$ such that $0\neq xb\in P$, we have $b\in P$ since $P$ is $g$-weakly total prime. If $y\in Y$ such that $0 = xy\in P$ and $y\notin P$, then $(x, y)$ is a $g$-total twin-zero of $P$. Because $xY\subseteq P$, it follows by Lemma \ref{Lemma 4.4} that $xY = 0$, which is a contradiction and therefore $y\in P$ and we have $Y\subseteq P$.
\end{proof}

\begin{thm}\label{Theorem 4.21}For a graded ideal $P$ of $R$ and $g\in G$ such that $P_{g}\neq R_{g}$, the following statements are equivalent:
\begin{enumerate}
\item $P$ is a $g$-weakly total prime ideal of $R$.

\item For any subset $Y$ of $R_{g}$ such that $Y\nsubseteq P$, $(P :_{R_{g}} Y) = \left\{x\in R_{g} : xY\subseteq P\right\} = P\bigcup (0 :_{R_{g}} Y)$.

\item For any subset $Y$ of $R_{g}$ such that $Y\nsubseteq P$, $(P :_{R_{g}} Y) = P$ or $(P :_{R_{g}} Y) = (0 :_{R_{g}} Y)$.
\end{enumerate}
\end{thm}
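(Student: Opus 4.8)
The plan is to prove the cyclic chain of implications $(1)\Rightarrow(2)\Rightarrow(3)\Rightarrow(1)$, since this is an equivalence of three statements and a cyclic proof avoids redundant work.

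For $(1)\Rightarrow(2)$, fix a subset $Y\subseteq R_g$ with $Y\nsubseteq P$. I would prove the two inclusions defining the set equality $(P:_{R_g}Y)=P\cup(0:_{R_g}Y)$ separately. The reverse inclusion $P\cup(0:_{R_g}Y)\subseteq(P:_{R_g}Y)$ is the easy direction: if $x\in P$ then $xY\subseteq P$ since $P$ is an ideal, and if $xY=0$ then trivially $xY\subseteq P$; either way $x\in(P:_{R_g}Y)$. The forward inclusion is where the hypothesis does the work. Take $x\in(P:_{R_g}Y)$, so $xY\subseteq P$. If $xY=0$ then $x\in(0:_{R_g}Y)$ and we are done, so suppose $xY\neq 0$, i.e. $0\neq xY\subseteq P$. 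Then Proposition \ref{Proposition 4.9} applies directly and yields $x\in P$ or $Y\subseteq P$; since $Y\nsubseteq P$ by assumption, we conclude $x\in P$. Thus every element of $(P:_{R_g}Y)$ lies in $P\cup(0:_{R_g}Y)$.

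For $(2)\Rightarrow(3)$, I would use the standard fact that if a set is the union of two subsets each of which is a subgroup (or more precisely an $R_e$-submodule of $R_g$), then it must equal one of them, because a group cannot be the union of two proper subgroups. Here $(P:_{R_g}Y)$ is an $R_e$-submodule of $R_g$, and both $P\cap R_g$ and $(0:_{R_g}Y)$ are $R_e$-submodules contained in it whose union is all of $(P:_{R_g}Y)$ by $(2)$. The set-theoretic lemma then forces $(P:_{R_g}Y)=P$ (meaning $(P:_{R_g}Y)=P\cap R_g$, interpreted within $R_g$) or $(P:_{R_g}Y)=(0:_{R_g}Y)$. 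I expect this to be the main obstacle, since one must be careful about the fact that $(P:_{R_g}Y)$ need not literally contain $P$ as a set unless we interpret everything inside $R_g$; the cleanest route is to restrict attention throughout to $R_g$ and note $P\subseteq(P:_{R_g}Y)$ really means $P\cap R_g$, and then invoke that a union of two submodules equalling the ambient module forces equality with one of them.

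For $(3)\Rightarrow(1)$, I would verify the defining property of a $g$-weakly total prime ideal. Suppose $x,y\in R_g$ with $0\neq xy\in P$ and $x\notin P$; the goal is $y\in P$. Apply $(3)$ to the singleton $Y=\{y\}$: if $y\in P$ we are done, so assume $y\notin P$, i.e. $Y\nsubseteq P$. Then $(P:_{R_g}\{y\})$ equals either $P$ or $(0:_{R_g}\{y\})$. Since $xy\in P$ we have $x\in(P:_{R_g}\{y\})$, but $x\notin P$, so the first alternative fails and $(P:_{R_g}\{y\})=(0:_{R_g}\{y\})$, forcing $xy=0$, which contradicts $0\neq xy$. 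Hence the assumption $y\notin P$ is untenable and $y\in P$, establishing that $P$ is $g$-weakly total prime and closing the cycle.
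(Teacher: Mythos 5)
Your proposal is correct and follows essentially the same route as the paper: the same cyclic chain $(1)\Rightarrow(2)\Rightarrow(3)\Rightarrow(1)$, with Proposition \ref{Proposition 4.9} doing the work in the first implication, the union-of-two-subgroups argument in the second, and the singleton $Y=\{y\}$ in the third. Your added remark that $P$ must be read as $P\cap R_{g}$ inside $R_{g}$ is a point the paper glosses over, but it does not change the argument.
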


\begin{proof}$(1)\Rightarrow(2)$: Let $x\in (P :_{R_{g}} Y)$. Now $xY\subseteq P$. If $xY\neq 0$, then since $P$ is $g$-weakly total prime it follows by Proposition \ref{Proposition 4.9} that $x\in P$. If $xY = 0$, then $x\in (0 :_{R_{g}} Y)$. So, $(P :_{R_{g}} Y)\subseteq P\bigcup (0 :_{R_{g}} Y)$. As the reverse containment holds for any graded ideal $P$, we have equality.

$(2)\Rightarrow(3)$: Since $P$ and $(0 :_{R_{g}} Y)$ are both subgroups of $R$, it follows that either $(P :_{R_{g}} Y) = P$ or $(P :_{R_{g}} Y) = (0 :_{R_{g}} Y)$.

$(3)\Rightarrow(1)$: Let $x, y\in R_{g}$ such that $0\neq xy\in P$. If $y\in P$, then we are done. So, suppose that $y\in R_{g}-P$. Then $(P :_{R_{g}} y)\neq (0 :_{R_{g}} y)$ and from (3), we have $(P :_{R_{g}} y) = P$. Hence, $x\in P$ and we are done.
\end{proof}

\begin{cor}\label{Corollary 4.22}For a graded ideal $P$ of $R$ and $g\in G$ such that $P_{g}\neq R_{g}$, the following statements are equivalent:
\begin{enumerate}
\item $P$ is a $g$-weakly total prime ideal of $R$.

\item For $x\in R_{g}-P$, $(P :_{R_{g}} x) = P\bigcup (0 :_{R_{g}} x)$.

\item For $x\in R_{g}-P$, $(P :_{R_{g}} x) = P$ or $(P :_{R_{g}} x) = (0 :_{R_{g}} x)$.
\end{enumerate}
\end{cor}

Recall that in \cite{Nastasescue}, if $T$ and $L$ are two $G$-graded rings, then $R=T\times L$ is a $G$-graded ring by $R_{g}=T_{g}\times L_{g}$ for all $g\in G$.

\begin{thm}\label{Theorem 4.24}Let $T$ and $L$ be two $G$-graded rings with unities and $R=T\times L$. If $P$ is a graded weakly total prime ideal of $R$, then either $P = 0$ or $P$ is a graded total prime ideal of $R$.
\end{thm}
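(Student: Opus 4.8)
The plan is to exploit the idempotent structure of a direct product of unital rings and reduce the statement to the two graded factors, in the spirit of the commutative case. Since $T$ and $L$ have unities $1_T\in T_e$ and $1_L\in L_e$, the elements $f_1=(1_T,0)$ and $f_2=(0,1_L)$ are homogeneous central idempotents of degree $e$ with $f_1f_2=f_2f_1=0$ and $f_1+f_2$ the unity of $R$. First I would show that every graded ideal $P$ of $R$ has the form $P=I\times J$, where $I=\{t\in T:(t,0)\in P\}$ and $J=\{l\in L:(0,l)\in P\}$; indeed $(t,l)\in P$ gives $(t,0)=(t,l)f_1\in P$ and $(0,l)=(t,l)f_2\in P$, and because $P$ is graded while $f_1,f_2\in R_e$, the sets $I$ and $J$ are graded ideals of $T$ and $L$ respectively. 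In this notation $P$ is proper precisely when $I\neq T$ or $J\neq L$, while $P\neq 0$ means $I\neq 0$ or $J\neq 0$.

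Assuming $P\neq 0$, the crucial step is to force $I=T$ or $J=L$. The device is an idempotent trick: if, say, $I\neq 0$, choose a nonzero homogeneous $a\in I$ of degree $k$; then for $\ell\in L_k$ the pair $(a,\ell)$ is homogeneous and $f_1(a,\ell)=(a,0)$ is a nonzero element of $P$. Since $P$ is graded weakly total prime and $(a,0)\neq 0$, either $f_1\in P$, giving $1_T\in I$ and hence $I=T$, or $(a,\ell)\in P$, giving $\ell\in J$. Letting $\ell$ vary over $L_k$ yields $L_k\subseteq J$, and the aim is to promote this, together with the symmetric consideration for $J$, to $I=T$ or $J=L$. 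Once that is in hand, say $J=L$, the ideal is $P=I\times L$, and for homogeneous $u=(a,b)$, $v=(c,d)$ one has $uv\in P$ iff $ac\in I$ and $u\in P$ iff $a\in I$; thus the total primeness of $P$ is equivalent to the total primeness of $I$ in $T$, which I would then deduce from graded weak total primeness of $P$ by manufacturing, for a relation $ac\in I$, a homogeneous product in $P$ that is nonzero.

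The step I expect to be the main obstacle is the homogeneity bookkeeping inside the idempotent trick. A homogeneous element of $R=T\times L$ of degree $g$ is a pair $(t,l)$ with $t\in T_g$ and $l\in L_g$ in the \emph{same} degree, so $(a,\ell)$ is legitimate only when $\deg\ell=\deg a$. If the nonzero homogeneous elements of $I$ all live in degrees $k\neq e$, one cannot pair such an $a$ with the unity $1_L\in L_e$, and the construction delivers only $L_k\subseteq J$ rather than $J=L$; the same difficulty reappears when building a nonzero product in the final reduction, since it needs $T_gT_h\neq 0$ or $L_gL_h\neq 0$. Bridging from control over the degrees supporting $I$ (or $J$) to the full equality $I=T$ or $J=L$ is therefore the delicate point, and I would expect it to require that some nonzero homogeneous element be available in degree $e$, or a further compatibility hypothesis; this degree-matching constraint is exactly the place where the argument is fragile and where one should check whether the statement needs strengthening.
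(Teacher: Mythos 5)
Your proposal is incomplete by your own admission: the idempotent trick yields only $L_k\subseteq J$ for those degrees $k$ that support nonzero homogeneous elements of $I$, and you do not close the gap from there to $I=T$ or $J=L$, nor carry out the final reduction. But the obstruction you isolate --- that a homogeneous element of $R=T\times L$ is a pair of homogeneous elements of the \emph{same} degree, so that $(a,1_L)$ lies in $h(R)$ only when $\deg a=e$ --- is precisely the step that the paper's own proof passes over in silence. The paper factors $(x_g,y_g)=(x_g,1)(1,y_g)$ and applies the definition of graded weakly total prime to the two factors, and later factors $(pq,1)=(p,1)(q,1)$ for arbitrary $p,q\in h(T)$; none of these factors is homogeneous in $R$ unless the degree involved is $e$. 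So the paper does not resolve your difficulty, it ignores it; in all other respects your decomposition $P=I\times J$ and your transfer-by-units strategy coincide with the paper's argument.

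The difficulty is in fact fatal to the statement as written. Let $G=\mathbb{Z}_2$, let $T=K[x]/(x^2)$ be graded by $T_0=K$ and $T_1=Kx$, let $L=K$ be trivially graded, and put $P=0\times L$ inside $R=T\times L$. If $(u,v)(u',v')=(uu',vv')$ is a nonzero homogeneous product lying in $P$, then $uu'=0$ and $vv'\neq 0$; the latter forces $v,v'\in L_0\setminus\{0\}$, hence both factors lie in $R_0=K\times K$, and $uu'=0$ in the field $K$ gives $u=0$ or $u'=0$, i.e.\ one factor lies in $P$. Thus $P$ is a nonzero graded weakly total prime ideal; yet $(x,0)(x,0)=(0,0)\in P$ while $(x,0)\notin P$, so $P$ is not graded total prime. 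Your closing suspicion that the theorem needs a strengthened hypothesis (some condition guaranteeing that the degrees carrying $I$ and $J$ can be paired with nonzero elements, in particular with units) is therefore correct; no proof along these lines, including the paper's, can succeed without it.
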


\begin{proof}Assume that $P=P_{1}\times P_{2}$ where $P_{1}$ is a graded ideal of $T$ and $P_{2}$ is a graded ideal of $L$. Suppose that $P\neq0$. Then there is an element $(x, y)$ of $P$ such that $(x, y)\neq (0, 0)$, and then there exists $g\in G$ such that $(x_{g}, y_{g})=(x, y)_{g}\neq(0, 0)$. As $P_{1}$ and $P_{2}$ are graded, $x_{g}\in P_{1}$ and $y_{g}\in P_{2}$. Now, $(0, 0)\neq (x_{g}, y_{g}) = (x_{g}, 1)(1, y_{g})\in P$ and $P$ is graded weakly total prime gives that $(x_{g}, 1)\in P$ or $(1, y_{g})\in P$. Suppose that $(x_{g}, 1)\in P$. Then $0\times L\subseteq P$, so $P = P_{1}\times L$. We show that $P_{1}$ is a graded total prime ideal of $T$. Let $pq\in P_{1}$, where $p, q\in h(T)$. Then $(0, 0)\neq (pq, 1) = (p, 1)(q, 1)\in P$. Now $P$ is graded weakly total prime gives $(p, 1)\in P$ or $(q, 1)\in P$. Hence, $p\in P_{1}$ or $q\in P_{1}$. So, $P_{1}$ is a graded total prime ideal of $T$. The case $(1, y)\in P$ is similar.
\end{proof}

Recall that in \cite{Nastasescue}, if $R$ is a $G$-graded ring and $I$ is a graded ideal of $R$, then $R/I$ is a $G$-graded ring by $(R/I)_{g}=(R_{g}+I)/I$ for all $g\in G$.

\begin{prop}\label{Proposition 4.25}Let $R$ be a graded ring and $I$, $P$ be proper graded ideals of $R$ such that $I\subseteq P$. Then the following holds:
\begin{enumerate}
\item If $P$ is a graded weakly total prime ideal of $R$, then $P/I$ is a graded weakly total prime ideal of $R/I$.

\item If $I$ is a graded weakly total prime ideal of $R$ and $P/I$ is a graded weakly total prime ideal of $R/I$, then $P$ is a graded weakly total prime ideal of $R$.
\end{enumerate}
\end{prop}

\begin{proof}
\begin{enumerate}
\item Let $0\neq (x + I)(y + I) = (xy + I)\in P/I$ where $x, y\in h(R)$, so $xy\in P$. If $xy = 0\in I$, then $(x+I)(y+I) = 0$, which is a contradiction. Hence, $xy\neq 0$ and since $xy\in P$ and P is graded weakly total prime, we get $x\in P$ or $y\in P$. Hence, $(x + I)\in P/I$ or $(y + I)\in P/I$ as required.
    
\item Let $0\neq xy\in P$ where $x, y\in h(R)$, so that $(x+I)(y+I)\in P/I$. If $xy\in P$, then since $I$ is graded weakly total prime, we get $x\in I\subseteq P$ or $y\in I\subseteq P$. If $xy\notin I$, then $0\neq (x + I)(y + I)\in P/I$. Now, since $P/I$ is graded weakly total prime, we get $(x + I)\in P/I$ or $(y + I)\in P/I$. Hence, $x\in P$ or $y\in P$ as needed.
\end{enumerate}
\end{proof}

\begin{cor}\label{Theorem 4.26}Let $P_{1}$ and $P_{2}$ be graded weakly total prime ideals of $R$ that are not graded total prime. Then $P_{1} + P_{2}$ is a graded weakly total prime ideal of $R$.
\end{cor}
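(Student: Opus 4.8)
The plan is to follow the pattern of Proposition \ref{Proposition 1(1)} and Theorem \ref{Theorem 4.5}: first extract the obstruction to total primeness as a total homogeneous twin-zero, then show that such a twin-zero forces the relevant products inside each $P_i$ to vanish, and finally run a twin-zero contradiction argument for $P_1+P_2$. First I would record the starting data. Since each $P_i$ is graded weakly total prime but not graded total prime, the note following the twin-zero definition guarantees a total homogeneous twin-zero $(a_i,b_i)$ of $P_i$; that is, $a_ib_i=0$ with $a_i,b_i\in h(R)\setminus P_i$, say $a_i\in R_{\alpha_i}$ and $b_i\in R_{\beta_i}$. When $\alpha_i=\beta_i$ the pair $(a_i,b_i)$ is an $\alpha_i$-total twin-zero, so Theorem \ref{Theorem 4.5} gives $(P_i)_{\alpha_i}^2=0$ and Lemma \ref{Lemma 4.3} gives $a_i(P_i)_{\alpha_i}=(P_i)_{\alpha_i}b_i=0$.

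Next, mimicking the modification trick behind Proposition \ref{Proposition 1(1)}, I would establish homogeneous square-zero relations for each $P_i$. For $p\in P_i\cap R_{\alpha_i}$ the element $a_i+p$ is homogeneous of degree $\alpha_i$ and lies outside $P_i$, while $(a_i+p)b_i=a_ib_i+pb_i=pb_i\in P_i$; since $b_i\notin P_i$ and $P_i$ is graded weakly total prime, this product must vanish, so $(P_i\cap R_{\alpha_i})b_i=0$, and symmetrically $a_i(P_i\cap R_{\beta_i})=0$. Feeding these back, for $p\in P_i\cap R_{\alpha_i}$ and $q\in P_i\cap R_{\beta_i}$ the product $(a_i+p)(b_i+q)=pq$ lies in $P_i$ with $a_i+p,\,b_i+q\notin P_i$, forcing $pq=0$. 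The goal of this step is to upgrade these degree-$(\alpha_i,\beta_i)$ relations to the full statement $P_i^2=0$, which is what makes $P_1+P_2$ tractable.

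Finally I would assemble the sum. Writing $N=P_1+P_2$, suppose toward a contradiction that $N$ is not graded weakly total prime: there are $x,y\in h(R)$ with $0\neq xy\in N$ and $x,y\notin N$. Because each $P_i$ is graded weakly total prime and $x,y\notin P_i$ while $xy\neq 0$, we must have $xy\notin P_1$ and $xy\notin P_2$, so $xy=p_1+p_2$ with homogeneous $p_i\in P_i$ that are nonzero and satisfy $p_1\notin P_2$, $p_2\notin P_1$. The plan is to combine the twin-zeros $(a_1,b_1),(a_2,b_2)$ with the relations of the previous step, together with the cross-annihilations $P_1P_2=P_2P_1=0$ obtained by the same modification argument applied across the two ideals, to produce a modified homogeneous pair that is a twin-zero incompatible with $xy=p_1+p_2$, thereby forcing $x\in N$ or $y\in N$.

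I expect the main obstacle to be precisely the interaction of the two ideals in the presence of grading and non-commutativity. The additive modification trick requires the modifying element to share the degree of a twin-zero component, so a mixed-degree twin-zero ($\alpha_i\neq\beta_i$) yields only $(P_i\cap R_{\alpha_i})b_i=0$, $a_i(P_i\cap R_{\beta_i})=0$, and $(P_i\cap R_{\alpha_i})(P_i\cap R_{\beta_i})=0$ rather than $P_i^2=0$ outright; promoting these to full square-zero, and likewise securing $P_1P_2=P_2P_1=0$ (which is delicate because $a_i+p_2$ need not stay outside $P_i$ when $p_2\in P_2$), is the hard part. If the direct assembly proves awkward, the alternative route is to invoke Proposition \ref{Proposition 4.25}: take $I=P_1$ and reduce to showing that the image $(P_1+P_2)/P_1$ is graded weakly total prime in $R/P_1$, where the square-zero behaviour of $P_1$ should simplify the verification.
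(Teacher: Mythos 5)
Your proposal does not reach a proof, and its one workable branch is only gestured at. The primary route (direct twin-zero assembly) stalls exactly where you say it does: for a mixed-degree total homogeneous twin-zero the modification trick yields only the partial relations $(P_i\cap R_{\alpha_i})b_i=0$, $a_i(P_i\cap R_{\beta_i})=0$ and $(P_i\cap R_{\alpha_i})(P_i\cap R_{\beta_i})=0$, and you never promote these to $P_i^{2}=0$; nor do you establish $P_1P_2=P_2P_1=0$ (for homogeneous $p\in P_1$, $q\in P_2$ with $pq\neq 0$, applying weak total primeness of $P_1$ to $pq\in P_1$ gives only ``$p\in P_1$ or $q\in P_1$,'' which is no information, and the analogous application to $P_2$ is equally vacuous). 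More importantly, even if all of this were secured, so that $(P_1+P_2)^{2}=0$, it would not finish the argument: the paper's example following Corollary \ref{Corollary 4.6} exhibits a proper graded ideal with square zero that is \emph{not} graded weakly total prime, so square-zero information alone cannot yield the conclusion. The decisive step in your plan --- ``produce a modified homogeneous pair that is a twin-zero incompatible with $xy=p_1+p_2$'' --- is precisely where all the content would have to live, and it is left entirely unspecified.

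The paper's actual proof is your fallback route, carried out: it notes $(P_1+P_2)/P_2\cong P_1/(P_1\bigcap P_2)$, deduces from Proposition \ref{Proposition 4.25}(1) (applied to $P_1\supseteq P_1\bigcap P_2$) that $(P_1+P_2)/P_2$ is graded weakly total prime, and then applies Proposition \ref{Proposition 4.25}(2) with $I=P_2$, using that $P_2$ is graded weakly total prime. You name the right tool (Proposition \ref{Proposition 4.25} with $I=P_1$) but do not identify the quotient via the second isomorphism theorem or invoke part (1); instead you fall back on the ``square-zero behaviour of $P_1$,'' which, as noted, is insufficient. Two further observations worth making: the hypothesis that the $P_i$ are not graded total prime plays no visible role in the paper's two-line argument, and transporting the property ``graded weakly total prime'' across the module isomorphism $(P_1+P_2)/P_2\cong P_1/(P_1\bigcap P_2)$ --- which is not an isomorphism of the ambient quotient rings $R/P_2$ and $R/(P_1\bigcap P_2)$ --- requires more justification than either your sketch or the paper supplies.
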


\begin{proof}Since $(P_{1} +P_{2})/P_{2}\cong P_{2}/\left(P_{1}\bigcap P_{2}\right)$we get that $(P_{1} +P_{2})/P_{2}$ is graded weakly total prime by Proposition \ref{Proposition 4.25} (1). Now the assertion follows from Proposition \ref{Proposition 4.25} (2).
\end{proof}

We are interested in the structure of graded rings in which every graded ideal is graded weakly total prime. Note that by definition, a graded weakly total prime ideal is a proper graded ideal of a graded ring. It is therefore not possible that every graded ideal of a graded ring is a graded weakly total prime ideal. However, a graded ring whose zero ideal is graded total prime is called a graded total prime ring. In this sense, every graded ring is a graded weakly total prime ring since the zero ideal is always graded weakly total prime. We may therefore say that every graded ideal of a graded ring is graded weakly total prime when every proper graded ideal of the graded ring is a graded weakly total prime ideal. If $R^{2} = 0$, then it is evident that every graded ideal of $R$ is graded weakly total prime.

Recall that, for a ring $R$ and $x\in R$, 

\begin{center}$\langle x\rangle=\left\{\displaystyle\sum_{i=1}^{n}r_{i}xs_{i}+rx+xs+mx:n\in \mathbb{N}, m\in \mathbb{Z}, r_{i}, s_{i}, r, s\in R\right\}$.\end{center}
If $R$ has a unity, then $\langle x\rangle=\left\{\displaystyle\sum_{i=1}^{n}r_{i}xs_{i}:n\in \mathbb{N}, r_{i}, s_{i}\in R\right\}$.

\begin{prop}\label{Proposition 5.1}Every graded ideal of $R$ is graded weakly total prime if and only if for every $a, b\in h(R)$ we have $\langle ab\rangle = \langle a\rangle$, $\langle ab\rangle = \langle b\rangle$ or $\langle ab\rangle = 0$.
\end{prop}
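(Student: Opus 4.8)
The plan is to follow the template of Proposition \ref{Proposition 3(1)}, replacing the graded ideals $P, Q$ and their product $PQ$ by the homogeneous elements $a, b$ and the principal graded ideal $\langle ab\rangle$. The first thing I would record is that for a homogeneous element $x\in h(R)$ the two-sided ideal $\langle x\rangle$ is again a graded ideal: writing each ring element occurring in the defining expression for $\langle x\rangle$ as a sum of its homogeneous components shows that every generator $r_i x s_i$, $rx$, $xs$, $mx$ splits into homogeneous pieces, each still of the same form and hence still lying in $\langle x\rangle$. Since $a\in R_g$ and $b\in R_h$ give $ab\in R_{gh}\subseteq h(R)$, all three of $\langle a\rangle$, $\langle b\rangle$, $\langle ab\rangle$ are graded. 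I would also note the elementary containments $ab\in\langle a\rangle$ (via the term $xs$ with $x=a$, $s=b$) and $ab\in\langle b\rangle$ (via the term $rx$ with $r=a$, $x=b$), together with $a\in\langle a\rangle$ and $b\in\langle b\rangle$ (via the term $mx$ with $m=1$); minimality of the generated ideal then yields $\langle ab\rangle\subseteq\langle a\rangle\cap\langle b\rangle$.

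For the forward direction, suppose every proper graded ideal is graded weakly total prime and fix $a, b\in h(R)$. If $\langle ab\rangle=0$ we are already in the third alternative, so assume $\langle ab\rangle\neq 0$; then $ab\neq 0$. If $\langle ab\rangle=R$, the containment $R=\langle ab\rangle\subseteq\langle a\rangle$ forces $\langle a\rangle=R=\langle ab\rangle$. Otherwise $K:=\langle ab\rangle$ is a proper graded ideal, hence graded weakly total prime by hypothesis; since $0\neq ab\in K$ and $a, b\in h(R)$, we obtain $a\in K$ or $b\in K$. In the first case $\langle a\rangle\subseteq\langle ab\rangle$ by minimality, which with the reverse containment above gives $\langle ab\rangle=\langle a\rangle$; the second case symmetrically gives $\langle ab\rangle=\langle b\rangle$.

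For the converse, assume the trichotomy holds and let $P$ be a proper graded ideal with $0\neq ab\in P$ for some $a, b\in h(R)$. Then $\langle ab\rangle\neq 0$, so the trichotomy rules out the last option and gives $\langle ab\rangle=\langle a\rangle$ or $\langle ab\rangle=\langle b\rangle$. Since $ab\in P$ and $P$ is an ideal, $\langle ab\rangle\subseteq P$; hence in the first case $a\in\langle a\rangle=\langle ab\rangle\subseteq P$, and in the second case $b\in\langle b\rangle=\langle ab\rangle\subseteq P$. Thus $a\in P$ or $b\in P$, so $P$ is graded weakly total prime.

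I expect the only genuine subtleties to be the bookkeeping that $\langle x\rangle$ is graded for homogeneous $x$ (needed so that ``graded weakly total prime'' can even be applied to $K=\langle ab\rangle$) and the careful handling of the degenerate cases $\langle ab\rangle=0$ and $\langle ab\rangle=R$ in the forward direction. No unity is required, since the membership $a\in\langle a\rangle$ is supplied directly by the integer-multiple term $mx$ in the definition of $\langle a\rangle$.
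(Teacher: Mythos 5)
Your proof is correct and follows essentially the same route as the paper's: treat $\langle ab\rangle$ as the relevant graded ideal, apply the weakly total prime hypothesis to $0\neq ab\in\langle ab\rangle$ in the forward direction, and use $\langle ab\rangle\subseteq P$ in the converse. The only difference is that you spell out the gradedness of $\langle x\rangle$ for homogeneous $x$ and the containments $\langle ab\rangle\subseteq\langle a\rangle\cap\langle b\rangle$, which the paper leaves implicit.
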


\begin{proof}Suppose that every graded ideal of $R$ is a graded weakly total prime ideal and let $a, b\in h(R)$. If $\langle ab\rangle = R$, then $\langle a\rangle = \langle b\rangle = R$. Suppose that $\langle ab\rangle\neq R$ and $\langle ab\rangle\neq 0$, then $\langle ab\rangle$ is a graded weakly total prime ideal. Now, since $0\neq ab\in \langle ab\rangle$, we have $a\in \langle ab\rangle$ or $b\in \langle ab\rangle$. Hence, $\langle ab\rangle = \langle a\rangle$ or $\langle ab\rangle = \langle b\rangle$. Conversely, let $I$ be any proper graded ideal of $R$ and suppose that $0\neq ab\in I$ for $a, b\in h(R)$. Now we have $\langle a\rangle = \langle ab\rangle\subseteq I$ or $\langle b\rangle = \langle ab\rangle\subseteq I$. Hence, $a\in I$ or $b\in I$ and we are done.
\end{proof}

\begin{cor}\label{Corollary 5.2}Let $R$ be a graded ring in which every graded ideal is graded weakly total prime. Then for every $x\in h(R)$, $\langle x^{2}\rangle=\langle x\rangle$ or $\langle x^{2}\rangle=0$.
\end{cor}

The next example shows that the converse of Corollary \ref{Corollary 5.2} is not true in general.

\begin{exa}Let $K$ be a field and $R = K\bigoplus K\bigoplus K$. Then for every element $x\in R$, we have $\langle x^{2}\rangle =\langle x\rangle$. On the other hand, if we consider the trivial graduation for $R$ by a group $G$, then $P=K\bigoplus0\bigoplus0$ will be a graded ideal of $R$ which is not graded weakly total prime.
\end{exa}

\section*{\textbf{Acknowledgement}}

This research was funded by the Deanship of Scientific Research at Princess Nourah bint Abdulrahman University through the Fast-track Research Funding Program.

\end{document}